\newtheorem{theorem}{Theorem}[section]
\newtheorem{corollary}[theorem]{Corollary}
\newtheorem{lemma}[theorem]{Lemma}
\newtheorem{proposition}[theorem]{Proposition}
\newtheorem{thm-dfn}[theorem]{Theorem-Definition}
\newtheorem{claim}[theorem]{Claim}
\theoremstyle{definition}
\newtheorem{definition}[theorem]{Definition}
\newtheorem{remark}[theorem]{Remark}
\numberwithin{equation}{section}
\newcommand{\quash}[1]{}  
\newcommand{\bbD}{{\mathbb D}}
\newcommand{\bbG}{{\mathbb G}}
\newcommand{\bbQ}{{\mathbb Q}}
\newcommand{\bbZ}{{\mathbb Z}}
\newcommand{\calC}{{\mathcal C}}
\newcommand{\calD}{{\mathcal D}}
\newcommand{\calE}{{\mathcal E}}
\newcommand{\calF}{{\mathcal F}}
\newcommand{\calG}{{\mathcal G}}
\newcommand{\calH}{{\mathcal H}}
\newcommand{\calK}{{\mathcal K}}
\newcommand{\calO}{{\mathcal O}}
\newcommand{\Hom}{{\textnormal{Hom}}}
\newcommand{\op}{{\textnormal{op}}}
\newcommand{\wt}{\widetilde}
\newcommand{\Id}{\textnormal{Id}}
\newcommand{\DGj}{{\textnormal{DG}}} 
\newcommand{\CH}{{\textnormal{CH}(G)}}
\newcommand{\CHI}{{\textnormal{CH}(G_I)}}
\newcommand{\CHcdot}{{\textnormal{CH}(\cdot)}}
\newcommand{\DG}{{\DGj_{D(G \backslash G)}}}
\newcommand{\DGI}{{\DGj_{D(G_I \backslash G_I)}}}
\newcommand{\DGinv}{{\DGj^{-1}_{D(G \backslash G)}}}
\newcommand{\DGIinv}{{\DGj^{-1}_{D(G_I \backslash G_I)}}}
\title[On the Deligne-Lusztig involution for character sheaves]{On the Deligne-Lusztig involution for character sheaves}
\author{Alexander ~Yom Din}
\begin{document}
	
\date{\today}

\begin{abstract}
	For a reductive group $G$, we study the Drinfeld-Gaitsgory functor of the category of conjugation-equivariant $D$-modules on $G$. We show that this functor is an equivalence of categories, and that it has a filtration with layers expressed via parabolic induction of parabolic restriction. We use this to provide a conceptual definition of the Deligne-Lusztig involution on the set of isomorphism classes of irreducible character $D$-modules, which was defined previously in \cite[\S 15]{Lu1}.
\end{abstract}  

\maketitle

\tableofcontents


\section{Introduction}

\subsection{Background and motivation}

\subsubsection{The Deligne-Lusztig involution}

Let $G$ be a connected reductive algebraic group over a finite field $F$. On the set of isomorphism classes of irreducible representations of the finite group $G(F)$ (over $\bar{\bbQ}_{\ell}$) one has an involution $\textnormal{DL}$, the \emph{Deligne-Lusztig\footnote{Other names relevant for this involution are Alvis, Curtis and Kawanaka; See \cite[{[47]}]{Lu2} for the history of this involution.} involution}. Namely, in the $K$-group of representations, given an irreducible representation $V$, the irreducible representation $\textnormal{DL} (V)$ is given, up to a sign, by $$ \sum_{I \subset \Sigma} (-1)^{|I|} pind_{P^-_I}^G pres_{P_I}^G V.$$ Here $\Sigma$ is the set of simple roots, $P_I , P^-_I$ are opposite standard parabolics associated to the subset $I \subset \Sigma$, and $pres , pind$ denote parabolic restriction and induction.

\medskip

In \cite[\S 15]{Lu1}, an involution given by an analogous formula is defined on the set of isomorphism classes of irreducible character sheaves on $G$ (it is denoted ``$d$" there, we will denote it by $\textnormal{DL}$).

\medskip

In the present paper, we give a conceptual definition of this involution on irreducible character sheaves. Technically, we work with character $D$-modules rather than character sheaves, but one can transport everything to $\ell$-adic sheaves as well and, anyhow, in this introduction we will be vague about such details.

\medskip

What we actually do is to study the \emph{Drinfeld-Gaitsgory functor} of the $DG$-category of conjugation-equivariant sheaves on a connected reductive algebraic group $G$. The desired involution on irreducible character sheaves is then simply induced by this functor. The main technical result is a filtration of this functor whose layers are expressed as parabolic inductions of parabolic restrictions (with some cohomological shifts), in particular showing that our definition of the involution coincides with that of \cite[\S 15]{Lu1}.

\medskip

A point which seems nice for us to stress is that our definition of the Deligne-Lusztig involution for irreducible character sheaves is ``abstract" - its input is the category of conjugation-equivariant sheaves as a category, so it is not ``informed" about the more specific structure of Levi subgroups, parabolic induction functors and so on, as in the formula above (that is what we meant by the adjective ``conceputal" above).

\medskip

Let us next try to motivate the two main objects of this paper, character sheaves and the Drinfeld-Gaitsgory functor, independently of the utility of defining the Deligne-Lusztig involution.

\subsubsection{Character sheaves}

In the representation theory of finite groups, a prominent object is the space of \emph{class functions}. In somewhat fancy terms, one might say that the space of class functions is the cocenter of the category of (finite-dimensional) representations, and so one can assign to every representation an element of the space of class functions - its \emph{character}.

\medskip

Thus, after some familiarity with categorification, one might suspect that for an algebraic group $G$, the category of conjugation-equivariant sheaves on $G$ is of some similar basic importance. Restricting ourselves to the case when $G$ is a connected reductive group, it turns out that indeed a certain subcategory of this category (discovered by Lusztig), the subcategory of \emph{character sheaves}, is a central object of study. It should roughly be seen as a cocenter of some $2$-category of categorical representations of $G$. A fundamental prior role of character sheaves, studied in great depth by Lusztig, is their tight match with actual characters of irreducible representations of finite groups of Lie type under the sheaf-to-function dictionary.

\medskip

The whole category of conjugation-equivariant sheaves can be seen as some ``direct integral" of the subcategories of character sheaves with various ``central characters".

\subsubsection{The Drinfeld-Gaitsgory functor}

For any compactly-generated presentable $DG$-category $\calC$, one constructs an endo-functor $$\textnormal{DG}_{\calC} : \calC \to \calC,$$ the \emph{Drinfeld-Gaitsgory functor}. It seems to be a basic homological construction, describing some ``duality" phenomena. To get some feeling of that, consider the following examples:

\begin{itemize}
	\item In \cite{GaYo}, it is shown that under some conditions on the category $\calC$, the Drinfeld-Gaitsgory functor is an equivalence of categories, whose inverse is the Serre functor (which traditionally embodies some sort of ``duality"). However, these conditions (which can be thought of as ``smallness" conditions) oftentimes do not hold, and it is our feeling that in such cases the Drinfeld-Gaitsgory functor is more ``correct" than the Serre functor.
	\item The Drinfeld-Gaitsgory functor has a fondness to intertwine left and right adjoints - see claim \ref{clm DG commutes} for a general statement of this sort.
	\item For a ring $A$, the Drinfeld-Gaitsgory functor of the $DG$-category of $A$-modules is given by tensoring with the bimodule $$ \textnormal{Hom}_{A \otimes A^{\textnormal{op}}} (A , A \otimes A^{\textnormal{op}})$$ (where $A$ is considered as a bimodule over itself in the usual way, and the $\textnormal{Hom}$ is in the derived sense). Thus, the relation to Hochschild cohomology, Grothendieck-style duality, etc. is seen.
\end{itemize}

\medskip

See the introduction to \cite{GaYo} (as well as that paper itself) for some appearances of the Drinfeld-Gaitsgory functor (there called the \emph{Pseudo-Identity functor}) in representation theory.

\subsection{The results of this paper in short}

Let us summarize the main results of this paper in a more technical way. Let $G$ be a connected reductive algebraic group over an algebraically closed field $k$ of characteristic zero. Consider the category $D(G \backslash G)$ of $D$-modules on the quotient-stack of $G$ by the conjugation action of $G$, and consider the corresponding Drinfeld-Gaitsgory (a.k.a. Pseudo-Identity) functor $$\DG : D(G \backslash G) \to D(G \backslash G).$$ We prove:

\begin{itemize}
	
	\item (proposition \ref{clm DG commutes pind}) Given opposite parabolic subgroups $P, P^- \subset G$ with Levi $L$, there is a commutation relation $\DG \circ pind_P^G \cong pind_{P^-}^G \circ \DGj_{D(L \backslash L)}$, where $pind$ is the parabolic induction. In view of the second adjunction, this is shown quite formally, since $\DGj$ ``likes" to intertwine between left and right adjoints.
	
	\medskip
	
	\item (theorem \ref{thm main}) The functor $\DG$ is ``glued" in a specific way from cohomological shifts of functors of the form $pind^G_{P^-} \circ pres^G_P$, where $P,P^- \subset G$ are opposite parabolic subgroups and $pind , \ pres$ are functors of parabolic induction and restriction. This is done by ``resolving" the kernel governing $\DG$, by means of the wonderful compactification (technically, by means of the Vinberg semigroup).

	\medskip
	
	\item (proposition \ref{DG invertible}) The functor $\DG$ is invertible (i.e. an equivalence of categories). This follows quite formally from $\DG$ being proper (i.e. sending compact objects to compact objects), which in turn follows from the previous result.
	
	\medskip

	\item (proposition \ref{prp DG strat-exact}) Fixing an integer $d$, the functor $\DG [d]$ is $t$-exact when restricted to the subcategory of character $D$-modules which, roughly, are obtained via parabolic induction of cuspidal character $D$-modules from Levi's whose center has dimension $d$.
	
	\medskip
	
	\item (claim \ref{DL involutive}) The previous item provides an auto-bijection of the set of isomorphism classes of irreducible character $D$-modules; This auto-bijection is an involution. This is the \emph{Deligne-Lusztig involution}, appearing in \cite[\S 15]{Lu1} (where it is denoted by ``$d$").
	
	\medskip
	
	\item (proposition \ref{prp DL on Spr alpha}) Here, for illustration purposes, we recalculate a partial case of \cite[Corollary 15.8.(c)]{Lu1}. Namely, we calculate that when applied to the irreducible unipotent character $D$-modules of the ``principal series", i.e. constituents of the Springer $D$-module, which are parametrized by irreducible representations $\alpha$ of the Weyl group $W$, the Deligne-Lusztig involution swaps $\alpha$ with $\textnormal{sgn} \otimes \alpha$, where $\textnormal{sgn}$ is the one-dimensional sign representation of $W$. This is calculated using the curious formula $$ \sum_{I \subset \Sigma} (-1)^{|I|} \cdot ind^W_{W_I} res^W_{W_I} V = \textnormal{sgn} \otimes V$$ in the $K$-group of finite-dimensional representations of the Weyl group $W$ (here $\Sigma$ is the set of simple reflections and $W_I \subset W$ are the various ``parabolic" subgroups), for which references are given.
\end{itemize}

\subsection{Acknowledgments}

I would like to thank the anonymous referee for detecting some errors in section \ref{sec DL on prin ser}. I would like to thank D. Gaitsgory for detecting errors in section \ref{sec DG functor}. I would like to thank Xinwen Zhu and Nir Avni for helpful discussions. I would like to thank Roman Bezrukavnikov, Sam Gunningham and George Lusztig for helpful correspondence.

\section{Notations and conventions}

\subsection{Categories}\label{subsec bg cat}

We fix an algebraically closed field $k$ of characteristic zero. We will work with the $(\infty , 1)$-category $Lin_k$ of $k$-linear, stable and presentable $(\infty,1)$-categories, where morphisms are continuous $k$-linear functors (i.e. $k$-linear functors preserving colimits). By a \emph{category} we will mean an object in $Lin_k$ unless remarked otherwise, and by a \emph{functor} we will mean a morphism in $Lin_k$, unless remarked otherwise.

\medskip

By a \emph{subcategory} of a category we will mean a full subcategory closed under colimits which is presentable (and thus itself an object of $Lin_k$).

\medskip

For a functor $F$, we denote by $F^R$ and $F^L$ the right and left adjoints of $F$. We say that a continuous functor between compactly generated categories is \emph{proper}, if it sends compact objects to compact objects. This is equivalent to the functor admitting a continuous right adjoint.

\medskip

Given a $t$-structure on $\calC \in Lin_k$, we will say that an object $\calF \in \calC$ is \emph{irreducible} if $\calF$ lies in $\calC^{\heartsuit}$ and is irreducible as an object of this abelian category. We will say that an object $\calF \in \calC$ is \emph{bounded} if $H^n (\calF) \neq 0$ only for finitely many $n \in \bbZ$. We will say that an object $\calF \in \calC$ has \emph{finite length} if it is bounded an each of its cohomologies has finite length in the abelian category $\calC^{\heartsuit}$ (denote by $\calC^{fl} \subset \calC$ the non-cocomplete subcategory of objects of finite length). We will say that an object $\calF \in \calC$ is \textnormal{semisimple} if it is isomorphic to a finite direct sum of cohomological shifts of semisimple objects of finite length in $\calC^{\heartsuit}$ (denote by $\calC^{ss} \subset \calC$ the non-cocomplete subcategory of semisimple objects).

\medskip

\begin{definition}\label{def glue} Let $\calC \in Lin_k$. We say that an object $x \in \calC$ is \emph{glued} from a list of objects $x_0 , \ldots , x_n \in \calC$ if there exist fiber sequences $$ y_{n-1} \to x \to x_n,$$ $$ y_{n-2} \to y_{n-1} \to x_{n-1},$$ $$ \ldots $$ $$ x_0 \to y_{1} \to x_{1}.$$
\end{definition}

\quash{\begin{definition} Let $\calC \in Lin_k$.
	\begin{enumerate}
		\item Let us say that an object $x \in \calC$ is \emph{glued} from a list of objects $x_0 , \ldots , x_n \in \calC$ if there exist fiber sequences $$ y_{n-1} \to x \to x_n,$$ $$ y_{n-2} \to y_{n-1} \to x_{n-1},$$ $$ \ldots $$ $$ x_0 \to y_{1} \to x_{1}.$$
		\item Let us say that $\calC$ is \emph{glued} from a list of subcategories $\calC_0 , \ldots , \calC_n \subset \calC$ if for every object $x \in \calC$, there exists a list of objects $x_0 , \ldots , x_n \in \calC$ with $x_i \in \calC_i$, such that $x$ is glued from the list $x_0 , \ldots , x_n$.
		\item Let us say that $\calC$ is \emph{glued semi-orthogonally} from a list of subcategories $\calC_0 , \ldots , \calC_n \subset \calC$ if it is glued from it, and in addition $\textnormal{Hom} (x_j , x_i) = 0$ whenever $x_j \in \calC_j , x_i \in \calC_i$ and $j < i$.
	\end{enumerate}
\end{definition}

\begin{lemma}
	Let $\calC \in Lin_k$. Suppose that $\calC$ is glued semi-orthogonally from a list of subcategories $\calC_0 , \ldots , \calC_n$. Suppose in addition that $\calC$ is compactly generated.
\end{lemma}

\begin{lemma}
	Let $\calC \in Lin_k$. Suppose that $\calC$ is glued semi-orthogonally from a list of subcategories $\calC_0 , \ldots , \calC_n$. Suppose in addition that $\textnormal{Hom} (x_i , x_j) = 0$ whenever $x_j \in \calC_j , x_i \in \calC_i$ and $j < i$. Then $$ \calC = \bigoplus_{0 \leq i \leq n} \calC_i.$$
\end{lemma}}

\subsection{Spaces}

By a \emph{scheme} we will mean a scheme of finite type over $k$. For convenience, by a \emph{space} we will mean a QCA stack of finite type over $k$ (see \cite[Definition 1.1.8]{DrGa1} for the notion of a QCA stack).

\medskip

For a space $X$, we will denote by $D(X)$ the category of $D$-modules on $X$. Recall (see \cite{DrGa1}) that $D(X)$ is compactly generated and self-dual (by Verdier duality). We denote by $\omega \in D(X)$ the ``dualizing sheaf" ($\omega = \pi^! k$ where $\pi :  X \to \bullet$), by $C \in D(X)$ the ``constant sheaf" ($C = \bbD^{\textnormal{Ve}} (\omega) = \pi^* k$) and, if $X$ is smooth, by $\calO$ the constant $D$-module ($\calO = \pi^{\circ} k = \pi^! k [n] = \pi^* k [-n]$ where $n$ is the dimension of $X$).

\subsection{The kernel formalism}

Given spaces $X$ and $Y$, we identify $$D(X \times Y) \approx \textnormal{Hom}_{Lin_k} (D(X) , D(Y))$$ by matching a ``kernel" $\calK \in D(X \times Y)$ with the functor $T_{\calK} (\calF) = (\pi_2)_{\blacktriangle} (\calK \overset{!}{\otimes} \pi_1^! \calF)$, where $(\pi_2)_{\blacktriangle}$ is the \emph{renormalized direct image} (see \cite[Definition 9.3.2]{DrGa1}).

\subsection{The group}

We fix a connected reductive group $G$. \quash{For a parabolic subgroup $P \subset G$, we denote by $U$ the unipotent radical of $P$ and by $M$ the Levi quotient of $P$. Given a parabolic subgroup $P^- \subset G$ opposite to $P$, we consider $M = P \cap P^-$ both a quotient of $P$ and a quotient of $P^-$.}

\medskip

We denote by $T$ the universal Cartan of $G$. We denote by $\Sigma \subset X^* (T)$ the set of simple roots, and by $W \subset Aut(T)$ the Weyl group.

\medskip

We fix a Torel $T_{sub} \subset B \subset G$, i.e. a Cartan subgroup $T_{sub}$ contained in a Borel subgroup $B$. We then have an identification $\phi : T \xrightarrow{\sim} T_{sub}$. For every $I \subset \Sigma$, we denote by $P_I \subset G$ the corresponding standard parabolic subgroup containing $B$ (with the convention $P_{\emptyset} = B$ and $P_{\Sigma} = G$), and by $P^-_I$ we denote the corresponding opposite parabolic containing $T_{sub}$. We denote $G_I := P_I \cap P_I^-$, and mostly think of it in the usual way as a quotient of $P_I$ and a quotient of $P_I^-$.

\medskip

For $0 \leq i \leq |\Sigma|$ we denote $d_i := \dim T - i = \dim Z(G) + |\Sigma| - i $ and for $I \subset \Sigma$ we denote $d_I := \dim Z(G_I) = d_{|I|}$.

\section{The Drinfeld-Gaitsgory functor}\label{sec DG functor}

In this section we recall the Drinfeld-Gaitsgory functor and prove some properties of it (which mostly can be extracted from \cite{Ga1}, but given here in greater generality).

\subsection{The definition}

Let us recall the definition of the \emph{Drinfeld-Gaitsgory functor} $\DGj_{\calC} : \calC \to \calC$, where $\calC \in Lin_k$ is compactly generated (see \cite{Ga1} and also \cite[Section 1.4.2]{GaYo}, where it is denoted $\textnormal{Ps-Id}_{\calC}$).

\medskip

For a compactly generated $\calC \in Lin_k$, we have a colimit-preserving functor\footnote{Here, the target is not an object of $Lin_k$ - i.e. we step briefly outside of our ``world".} $$ (\cdot)^{\vee} : \calC \to (\calC^{\vee})^{\op}$$ characterized by $$ \Hom (c^{\op} , m^{\vee}) \cong \Hom (m , c)$$ for $m \in \calC , c \in \calC^c$ (and $c^{\op} \in (\calC^c)^{\op}$ denotes the corresponding object in the opposite category). An object $m \in \calC$ is called \emph{reflexive}, if the natural map $m \to (m^{\vee})^{\vee}$ is an isomorphism. Compact objects are reflextive, and \cite[Corollary 6.1.8]{Ga1} shows that coherent objects in $D(X)$, for a space $X$, are reflexive as well.

\medskip

It will be convenient in what follows to keep in mind the identification $$ \Hom (\calC , \calD) \cong \calC^{\vee} \otimes \calD.$$ Given two compactly generated categories $\calC , \calD \in Lin_k$ we denote $$ \wt{\DGj}_{\calC , \calD} : \Hom (\calC , \calD) \cong \calC^{\vee} \otimes \calD \xrightarrow{(\cdot)^{\vee}} (\calD^{\vee} \otimes \calC)^{\op} \cong \Hom (\calD , \calC)^{\op}.$$ \quash{by sending $(c , d) \in (\calC^c)^{\op} \times \calD^c$ to $(d,c) \in (\calD^c)^{\op} \times \calC$.} Then the Drinfeld-Gaitsgory functor $\DGj_{\calC} : \calC \to \calC$ is given as $$\DGj_{\calC} := \wt{\DGj}_{\calC , \calC} (\textnormal{Id}_{\calC}).$$

\medskip

Let us recall (see \cite[Lemma 2.1.3]{GaYo}) that for a space $X$, the functor $\DGj_{D(X)}$ is given by the kernel $\Delta_! C$ where $\Delta : X \to X \times X$ is the diagonal.

\subsection{Intertwining left and right adjoints}

\begin{claim}\label{clm DG commutes}
	Let $\calC , \calD \in Lin_k$ be compactly generated categories, and let $F : \calC \to \calD$ be such that $F^R$ and $F^{RR}$ exist and are continuous, as well as $F^L$ and $F^{LL}$ exist (in other words, both $F$ and $F^{\vee}$ should admit a proper right adjoint). Then the following diagram commutes: $$\xymatrix{ \calD \ar[r]^{\DGj_{\calD}} \ar[d]_{F^R} & \calD \ar[d]^{F^L} \\ \calC \ar[r]_{\DGj_{\calC}} & \calC}.$$
\end{claim}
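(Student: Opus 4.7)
The plan is to identify both composites with the common value $\wt{\DGj}_{\calC, \calD}(F) \in \Hom(\calD, \calC)^{\op}$, so that the claim becomes automatic by transitivity. The key ingredient is a naturality of the bifunctor $\wt{\DGj}$ with respect to composition, which is a manifestation of the fact that the object-level duality $(\cdot)^\vee$ converts post-composition by $F$ into pre-composition by $F^R$ (and symmetrically for the left).

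Concretely, I would establish that both $\DGj_\calC \circ F^R$ and $F^L \circ \DGj_\calD$ are isomorphic to $\wt{\DGj}_{\calC, \calD}(F)$, via two different naturality squares. For the first identification, I would prove commutativity of
$$\xymatrix{
\Hom(\calC, \calC) \ar[r]^{F \circ (-)} \ar[d]_{\wt{\DGj}_{\calC, \calC}} & \Hom(\calC, \calD) \ar[d]^{\wt{\DGj}_{\calC, \calD}} \\
\Hom(\calC, \calC)^{\op} \ar[r]_{(-) \circ F^R} & \Hom(\calD, \calC)^{\op}
}$$
up to natural isomorphism, and then evaluate at $\Id_\calC$: the top-right composite sends $\Id_\calC \mapsto F \mapsto \wt{\DGj}_{\calC,\calD}(F)$ while the left-bottom composite sends $\Id_\calC \mapsto \DGj_\calC \mapsto \DGj_\calC \circ F^R$. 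The second identification comes from the analogous diagram in which pre-composition by $F$ appears on the top edge and post-composition by $F^L$ on the bottom, evaluated at $\Id_\calD \in \Hom(\calD, \calD)$; here the hypotheses on $F^L$ and $F^{LL}$ (equivalently, $F^\vee$ being proper) are what allow the symmetric square to commute.

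The main technical obstacle is verifying commutativity of these naturality squares. This reduces, at the level of the object-dualization $(\cdot)^\vee$ on tensor-product categories $\calE^\vee \otimes \calD$, to the pointwise identity
$$(F a)^\vee \;\cong\; (F^L)^\vee(a^\vee)$$
for reflexive $a \in \calC$, where $(F^L)^\vee \colon \calC^\vee \to \calD^\vee$ is the dual of $F^L$. This identity follows from the adjoint-continuity hypotheses: since $F^{RR}$ is continuous, $F^R$ preserves compacts, so $F^\vee(d^\vee) \cong (F^R d)^\vee$ on compacts; the general principle that duality swaps left and right adjoints gives $(F^L)^\vee \cong (F^\vee)^R$; a direct adjunction computation using $F \dashv F^R$ then shows both sides represent the same functor $d^\vee \mapsto \Hom_\calC(a, F^R d)$ on compacts of $\calD^\vee$, and extending by continuity handles the non-compact case. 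Promoting this object-level identity to the required commutative squares in the Hom-categories (via the tensor-product identification $\Hom(\calA, \calB) \cong \calA^\vee \otimes \calB$) completes the proof.
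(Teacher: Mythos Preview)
Your proposal is correct and is essentially the same argument as the paper's: the paper also shows that both $\DGj_\calC \circ F^R$ and $F^L \circ \DGj_\calD$ coincide with $\wt{\DGj}_{\calC,\calD}(F)$ via two naturality squares for $\wt{\DGj}$, assembled into a single three-row diagram in the tensor-product notation $\calC^\vee \otimes \calD$ rather than $\Hom(\calC,\calD)$, and then evaluates at $\Id_\calC$ and $\Id_\calD$. The paper simply cites the identity ``conjugate of the left adjoint equals the dual'' from \cite{Ga1} for the commutativity of the squares, whereas you spell out the pointwise verification $(Fa)^\vee \cong (F^L)^\vee(a^\vee)$; otherwise the structure is identical.
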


\begin{proof}
	Recall the for a continuous functor admitting a left adjoint, the conjugate of the the left adjoint is the same as the dual (see \cite{Ga1} for all these terms). We thus see that we have a commutative diagram \begin{equation}\label{eq DG abstract} \xymatrix{\calD^{\vee} \otimes \calD \ar[d]_{F^{\vee} \otimes -} \ar[r]^{\wt{\DGj}_{\calD , \calD}} & (\calD^{\vee} \otimes \calD)^{\op} \ar[d] \ar[d]^{- \otimes F^L} \\ \calC^{\vee} \otimes \calD \ar[r]^{\wt{\DGj}_{\calC , \calD}} & (\calD^{\vee} \otimes \calC)^{\op} \\ \calC^{\vee} \otimes \calC \ar[u]^{- \otimes F} \ar[r]^{\wt{\DGj}_{\calC , \calC}} & (\calC^{\vee} \otimes \calC)^{\op} \ar[u]_{(F^R)^{\vee} \otimes -} }. \end{equation} Then, $\Id_{\calC}$ in the bottom left and $\Id_{\calD}$ in the top left are mapped to the same thing in the middle right as is evident by going vertically and then horizontally. Going now horizontally and then vertically, we get the desired identity.

\end{proof}

\subsection{Invertibility}

\begin{claim}\label{clm DG inv if prp}
	Let $\calC \in Lin_k$ be a compactly generated category. Suppose that $\Id_{\calC} \in \Hom (\calC , \calC)$ is reflexive. If $\DGj_{\calC}$ admits a proper right adjoint then it is right-invertible. In particular, if $\DGj_{\calC}$ and $\DGj_{\calC^{\vee}}$ admit proper right adjoints, then $\DGj_{\calC}$ is invertible.
\end{claim}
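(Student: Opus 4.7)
The plan is to produce an explicit right inverse to $\DGj_\calC$, namely the assumed continuous right adjoint $\DGj_\calC^R$, by exhibiting a canonical isomorphism $\DGj_\calC \circ \DGj_\calC^R \cong \Id_\calC$. The main inputs will be the reflexivity of $\Id_\calC$ and a formula identifying $\wt{\DGj}_{\calC, \calC}(F)$ with $\DGj_\calC \circ F^R$ for proper $F$.

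First I would unpack the reflexivity hypothesis. By definition, $\wt{\DGj}_{\calC, \calC}(\Id_\calC) = \DGj_\calC$ as an element of $(\calC^\vee \otimes \calC)^{\op} \cong \Hom(\calC, \calC)^{\op}$. Reflexivity of $\Id_\calC$ says that the natural map $\Id_\calC \to (\Id_\calC^\vee)^\vee$ is an isomorphism, which translated through the definition of $\wt{\DGj}$ amounts to a canonical identification $\wt{\DGj}_{\calC, \calC}(\DGj_\calC) \cong \Id_\calC$ (once one accounts for the double ${\op}$ and the swap $\calC^\vee \otimes \calC \cong \calC \otimes \calC^\vee$ used in the definition of $\wt{\DGj}$).

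Next, the key formula: for any proper continuous functor $F : \calC \to \calC$ (i.e.\ one whose right adjoint $F^R$ is continuous), I would aim to establish a natural isomorphism
\[
\wt{\DGj}_{\calC, \calC}(F) \;\cong\; \DGj_\calC \circ F^R.
\]
As a sanity check, for $F = \Id_\calC$ both sides equal $\DGj_\calC$ by definition. This formula should follow from the same diagram-chase used in the proof of claim~\ref{clm DG commutes} (equation~\eqref{eq DG abstract}): applying $(\cdot)^\vee$ to the adjunction $F \dashv F^R$ together with the definition $\DGj_\calC = \wt{\DGj}(\Id_\calC)$ produces the required iso. Feeding $F = \DGj_\calC$ (proper by hypothesis) into this formula and combining with the previous step yields $\Id_\calC \cong \DGj_\calC \circ \DGj_\calC^R$, which is right-invertibility.

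For the ``in particular'' statement, I would apply the first half to $\calC^\vee$ in place of $\calC$. Reflexivity of $\Id_\calC$ in $\Hom(\calC, \calC)$ is symmetric under $\calC \leftrightarrow \calC^\vee$, so $\Id_{\calC^\vee}$ is reflexive as well; hence the hypothesis that $\DGj_{\calC^\vee}$ admits a proper right adjoint yields $\DGj_{\calC^\vee} \circ \DGj_{\calC^\vee}^R \cong \Id_{\calC^\vee}$. Dualizing via $(\calC^\vee)^\vee = \calC$ and using that the dual of $\DGj_{\calC^\vee}$ is $\DGj_\calC$ and that the dual of a right adjoint is the left adjoint of the dual (while composition reverses under $(\cdot)^\vee$), one obtains an isomorphism $\DGj_\calC^L \circ \DGj_\calC \cong \Id_\calC$. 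Combined with the right inverse from the first half, $\DGj_\calC$ is invertible. The main obstacle is the careful verification of the formula $\wt{\DGj}_{\calC,\calC}(F) \cong \DGj_\calC \circ F^R$; the bookkeeping of opposites, swaps and adjunctions is the only nontrivial ingredient.
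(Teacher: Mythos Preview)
Your proposal is correct and follows essentially the same approach as the paper. The paper also extracts $\wt{\DGj}_{\calC,\calC}(\DGj_\calC)\cong\Id_\calC$ from reflexivity and then reads off $\Id_\calC\cong\DGj_\calC\circ(\DGj_\calC)^R$ directly from the lower square of diagram~\eqref{eq DG abstract} with $\calD=\calC$ and $F=\DGj_\calC$; your general formula $\wt{\DGj}_{\calC,\calC}(F)\cong\DGj_\calC\circ F^R$ for proper $F$ is precisely what that square encodes, so the arguments are the same up to packaging. For the second assertion the paper likewise dualizes via $\DGj_{\calC^\vee}\cong(\DGj_\calC)^\vee$ to pass from right-invertibility of $\DGj_{\calC^\vee}$ to left-invertibility of $\DGj_\calC$, just as you do.
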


\begin{proof}
	Notice that the condition that $\Id_{\calC} \in \Hom (\calC , \calC)$ is reflexive means $$\wt{\DGj}_{\calC , \calC} (\DGj_{\calC}) \cong \Id_{\calC}.$$ Considering again the lower square of diagram \ref{eq DG abstract}, with $\calD := \calC$ and $F := \DGj_{\calC}$, evaluating on the object $\Id_{\calC}$ in the left bottom, we obtain $$ \Id_{\calC} \cong \DGj_{\calC} \circ (\DGj_{\calC})^R.$$ Thus, $\DGj_{\calC}$ is right-invertible.

\medskip

	The second assertion follows by recalling that $\DGj_{\calC^{\vee}} \cong (\DGj_{\calC})^{\vee}$, so that if $\DGj_{\calC^{\vee}}$ admits a proper right adjoint then, by what we just proved, $(\DGj_{\calC})^{\vee}$ is right-invertible, and thus $\DGj_{\calC}$ is left-invertible.
\end{proof}

\begin{corollary}[{\cite[Corollary 6.7.2]{Ga1}}]\label{cor DG inv if prp}
	Let $X$ be a space. If $\DGj_{D(X)}$ admits a proper right adjoint, then $\DGj_{D(X)}$ is invertible.
\end{corollary}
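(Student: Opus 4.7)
The plan is to deduce this directly from Claim \ref{clm DG inv if prp} applied to $\calC := D(X)$. The second assertion of that claim has two hypotheses: (i) $\Id_{D(X)} \in \Hom(D(X), D(X))$ is reflexive, and (ii) both $\DGj_{D(X)}$ and $\DGj_{D(X)^\vee}$ admit proper right adjoints. The first is already assumed, so the work is in checking (i) and transferring the hypothesis to $\DGj_{D(X)^\vee}$.

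For (i), I would unwind the kernel formalism. Under the identification $\Hom(D(X), D(X)) \cong D(X \times X)$, the identity functor corresponds to a kernel $\calK_{\Id}$ of the form $\Delta_* \omega_X$, as one verifies from the defining formula $T_{\calK}(\calF) = (\pi_2)_\blacktriangle(\calK \overset{!}{\otimes} \pi_1^! \calF)$ using base change along the closed embedding $\Delta$ and the fact that $\omega_X$ is the $\overset{!}{\otimes}$-unit. This kernel is coherent on $X \times X$, being the $*$-pushforward of a coherent object along a proper morphism. By the cited \cite[Corollary 6.1.8]{Ga1}, coherent objects in $D(X \times X)$ are reflexive, giving (i).

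For the transfer to $\DGj_{D(X)^\vee}$, I would invoke Verdier self-duality $D(X)^\vee \cong D(X)$ and the general identification $\DGj_{\calC^\vee} \cong (\DGj_{\calC})^\vee$ recalled in the proof of Claim \ref{clm DG inv if prp}. Under the kernel equivalence, dualization of a functor corresponds to pulling the kernel back along the swap $\sigma : X \times X \to X \times X$. Since the kernel $\Delta_! C$ computing $\DGj_{D(X)}$ is supported on the diagonal, it is $\sigma$-invariant, so the functor $\DGj_{D(X)^\vee}$ is identified with $\DGj_{D(X)}$ itself under the self-duality. Therefore the assumed proper right adjoint for $\DGj_{D(X)}$ transports to a proper right adjoint for $\DGj_{D(X)^\vee}$, giving (ii).

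With both hypotheses verified, Claim \ref{clm DG inv if prp} immediately yields invertibility. The only step requiring any care is confirming the symmetry identification in (ii): specifically that the abstract self-duality of $D(X)$ matches, on the kernel side, with the geometric swap of factors. This is essentially formal, but is where I would spend the most care in a full write-up.
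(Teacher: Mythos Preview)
Your approach matches the paper's: verify the hypotheses of Claim~\ref{clm DG inv if prp} and apply it. Your handling of (ii) via swap-invariance of the diagonal kernel is more explicit than the paper's one-line appeal to Verdier self-duality, but amounts to the same thing.

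The actual gap is in (i), not (ii). You argue that $\Delta_* \omega_X$ is coherent because it is the $*$-pushforward of a coherent object along a proper morphism. But a ``space'' in this paper means a QCA stack, and the diagonal of such a stack is in general not proper (nor a closed embedding): already for $X = BG$, or for $X = G \backslash G$, the fibers of $\Delta$ over points in its image are positive-dimensional affine groups (copies of $G$, respectively centralizers in $G$). So this justification fails as written. The paper's argument instead notes that the identity functor trivially preserves holonomic objects and invokes the result from \cite{Ga1} that a kernel functor with this property has holonomic, hence coherent, kernel. With that replacement for your coherence step, the proof is complete and coincides with the paper's.
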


\begin{proof}
	First, notice that $\Id_{D(X)} \in \Hom (D(X) , D(X))$ is reflexive, since it is given by a coherent kernel (as is easily seen from preservation of holonomicity by functors!), and as recalled above, coherent objects are reflexive. Second, recall that $D(X)^{\vee} \cong D(X)$ via Verdier duality. Thus the corollary follows from the claim.
\end{proof}

\section{Adjoint-equivariant $D$-modules}

In this section we gather information regarding conjugation-equivariant $D$-modules, their parabolic restriction and induction, and character $D$-modules.

\subsection{Parabolic induction and restriction}

Let $P \subset G$ be a parabolic, with Levi quotient $M$. The functor of \emph{parabolic restriction} $$pres^G_P : D( G \backslash G) \to D(M \backslash M)$$ is defined as $q_* p^!$ where

\begin{equation}\label{eq basic cor}
	\xymatrix{& P \ \backslash \ P \ar[ld]_p \ar[rd]^q & \\ G \ \backslash \ G & & M \ \backslash \ M}
\end{equation} is the natural correspondence.

The map $p$ is projective and the map $q$ is smooth of relative dimension $0$ (notice that $q$ is also safe!). Hence $pres^G_P$ admits a left adjoint, the functor of \emph{parabolic induction} $$pind^G_P : D(M \backslash M) \to D(G \backslash G)$$ given by $p_! q^* \cong p_* q^!$.

\medskip

For $I \subset \Sigma$, we abbreviate: $$pres_I := pres^G_{P_I}, \quad pres_I^- := pres^G_{P^-_I} , \quad pind_I := pind^G_{P_I} , \quad pind_I^- := pind^G_{P_I^-}.$$

\begin{remark}\label{rem properties of pind}
For $J \subset I \subset \Sigma$, let us denote by $pind_J^I : D(G_J \backslash G_J) \to D(G_I \backslash G_I)$ the analogous functor for $G_I$. One has $pind_I \circ pind_J^I \cong pind_J$. Also, ``Mackey theory" shows that for $I,J \subset \Sigma$, the functor $pres_J \circ pind_I$ can be glued from functors of the form $ pind_{K_2}^J \circ ? \circ pres_{K_1}^I$ where $K_1 \subset I$ and $K_2 \subset J$ with $|K_1| = |K_2|$ and $?$ is an equivalence given by conjugation by a suitable element in $G$. For example, in the sepcial case $I = \emptyset$, the functor $pres_J \circ pind_{\emptyset}$ can be glued from functors of the form $pind_{\emptyset}^J \circ \dot{w}^!$ where $\dot{w} : T \backslash T \to T \backslash T$ is the conjugation by $\dot{w} \in N_G (T)$. See \cite{Gu1} for explicit details in the case $G \backslash \mathfrak{g}$ (everything transfers to the case $G \backslash G$ word-by-word).
\end{remark}

\subsection{Second adjointness and exactness}

The proof of the following theorem using Braden's hyperbolic localization theorem is sketched in \cite[Section 0.2.1]{DrGa3}:

\begin{theorem}[Second adjointness]\label{thm second adj}
	The functor $pres_I$ is left adjoint to $pind_I^-$.
\end{theorem}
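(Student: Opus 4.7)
The plan is to invoke Braden's hyperbolic localization theorem in its stack-theoretic form, as sketched in \cite{DrGa3}. I would choose a cocharacter $\lambda : \bbG_m \to Z(G_I)$ that pairs strictly positively with each simple root in $\Sigma \setminus I$ and trivially with each simple root in $I$; the $\bbG_m$-action on $G$ by $\lambda$-conjugation $t \cdot g = \lambda(t) g \lambda(t)^{-1}$ then has fixed locus $G_I$, attractor $P_I$, and repeller $P_I^-$. Since $\lambda$ factors through $G$, this $\bbG_m$-action commutes with $G$-conjugation and descends to an action on the stack $G \backslash G$ which is canonically trivialized (being realized by inner automorphisms); hence every object of $D(G \backslash G)$ is automatically $\bbG_m$-monodromic, and the attracting/fixed/repelling substacks are $P_I \backslash P_I$, $G_I \backslash G_I$, $P_I^- \backslash P_I^-$, with the maps $p, q, p^-, q^-$ being precisely those that enter the definitions of $pres_I$, $pind_I$, $pres_I^-$, $pind_I^-$.

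Braden's theorem then yields a canonical natural isomorphism of functors $D(G \backslash G) \to D(G_I \backslash G_I)$
$$ pres_I \;=\; q_* p^! \;\cong\; (q^-)_! (p^-)^*, $$
identifying the two hyperbolic localizations associated to the attractor and the repeller. Taking right adjoints via the formal adjunctions $((p^-)^*, (p^-)_*)$ and $((q^-)_!, (q^-)^!)$, the right adjoint of $(q^-)_!(p^-)^*$ is $(p^-)_*(q^-)^!$, which by the isomorphism $(p^-)_! (q^-)^* \cong (p^-)_* (q^-)^!$ recorded in the paper is exactly $pind_I^-$. This establishes $pres_I \dashv pind_I^-$.

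The main obstacle I foresee is the stack-theoretic upgrade of Braden's theorem itself: one must fix a suitable notion of $\bbG_m$-monodromicity for $D$-modules on QCA stacks, identify the $\bbG_m$-attracting and repelling loci of the induced action on $G \backslash G$ (as stacks, not merely as coarse quotients) with the parabolic correspondences attached to $P_I$ and $P_I^-$, and verify that Braden's canonical natural transformation from attracting to repelling hyperbolic localization is an isomorphism on all such monodromic objects. Once this geometric input is granted, the rest of the argument is purely formal adjoint-juggling, and the specific geometry of $G$ enters only through the choice of $\lambda$.
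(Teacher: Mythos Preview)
Your proposal is correct and follows exactly the approach the paper itself indicates: the paper does not give an independent proof but simply records that the statement is established via Braden's hyperbolic localization as sketched in \cite[Section 0.2.1]{DrGa3}, and your outline (choice of a central cocharacter $\lambda$ into $Z(G_I)$, identification of attractor/repeller/fixed loci with $P_I$, $P_I^-$, $G_I$, automatic monodromicity on $G\backslash G$, and the formal passage to right adjoints) is precisely that sketch fleshed out. The ``obstacle'' you flag---the stack-theoretic formulation of Braden's theorem and the identification of the attractor/repeller substacks---is exactly what \cite{DrGa3} supplies, so there is no gap.
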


\begin{corollary}\label{cor second adj}
	The functors $pres_I$ and $pind_I$ admit all iterated left and right adjoints (in particular, $pres_I$ is proper).
\end{corollary}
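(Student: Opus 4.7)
The plan is to stitch together the defining adjunction $pind_I \dashv pres_I$ with the second adjointness (theorem \ref{thm second adj}) and its version for the opposite parabolic. By construction, $pind_I$ is the left adjoint of $pres_I$, and similarly $pind_I^-$ is the left adjoint of $pres_I^-$. Theorem \ref{thm second adj} provides the further adjunction $pres_I \dashv pind_I^-$. Applying the same theorem with the roles of $P_I$ and $P_I^-$ interchanged (the hypotheses are symmetric in the two opposite parabolics) yields $pres_I^- \dashv pind_I$.

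Taken together, these four adjunctions close into a cycle
$$ pind_I \dashv pres_I \dashv pind_I^- \dashv pres_I^- \dashv pind_I \dashv pres_I \dashv \ldots,$$
so iterating right adjoints of $pres_I$ cycles with period four through the list $pind_I^-, pres_I^-, pind_I, pres_I, \ldots$, and iterating left adjoints cycles through the same four functors in the reverse order. In particular, all iterated left and right adjoints of $pres_I$ exist, and they are among $\{pres_I, pind_I, pres_I^-, pind_I^-\}$. The same conclusion for $pind_I$ is immediate, since it appears in the same cycle.

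For the parenthetical assertion that $pres_I$ is proper: each of the four functors above is a morphism in $Lin_k$ by construction (they are built from $p^!, q_*, p_*, q^!, p_!, q^*$ on the correspondence \eqref{eq basic cor}, all of which are continuous since $p$ is proper and $q$ is smooth and safe), and the right adjoint of $pres_I$ is $pind_I^-$, which is continuous. By the criterion recalled in section \ref{subsec bg cat}, admitting a continuous right adjoint is precisely the definition of properness.

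I do not anticipate a genuine obstacle: the entire content of the corollary is already packaged into the second adjointness theorem plus the symmetry between $P_I$ and $P_I^-$. The only thing worth double-checking is that applying theorem \ref{thm second adj} with $P_I$ and $P_I^-$ swapped really does yield $pres_I^- \dashv pind_I$; this is formal since nothing in the statement of that theorem privileges one of the two opposite parabolics over the other.
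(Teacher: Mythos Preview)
Your argument is correct and is exactly the reasoning the paper leaves implicit: the corollary is stated immediately after theorem \ref{thm second adj} with no separate proof, and the intended deduction is precisely the four-periodic adjunction cycle you spell out. There is nothing to add.
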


The following theorem is proved in \cite{BeYo}:

\begin{theorem}[{\cite[Theorem 5.4]{BeYo}}]\label{thm exactness}
	The functors $pres_I$ and $pind_I$ are $t$-exact.
\end{theorem}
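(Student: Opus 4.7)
The plan is to combine the formal symmetry provided by the second adjointness with a geometric analysis of the partial Grothendieck--Springer map underlying $pind_I$. Together with the definitional adjunction $pind_I \dashv pres_I$, the second adjointness $pres_I \dashv pind_I^-$ (Theorem~\ref{thm second adj}) propagates one-sided $t$-exactness across all four functors. Explicitly, for any adjoint pair $F \dashv G$ one has $F$ right $t$-exact iff $G$ left $t$-exact, and $F$ left $t$-exact iff $G$ right $t$-exact. Chasing this through both adjunctions, one finds that $pres_I$ is $t$-exact iff $pind_I$ is $t$-exact (and likewise with $P_I$ replaced by $P_I^-$). So it suffices to prove the single statement that $pind_I$ is $t$-exact for every standard parabolic.

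For that single statement, I would work with the formula $pind_I = p_* q^!$. Since $q : P_I\backslash P_I \to G_I\backslash G_I$ is smooth of relative dimension $0$, $q^! = q^*$ is $t$-exact, and the question reduces to the $t$-exactness of $p_*$. Here $p$ descends from the projective map $\tilde G_I := G \times^{P_I} P_I \to G$, $(g,x) \mapsto gxg^{-1}$, a partial Grothendieck--Springer map which is well-known to be semismall. Combined with the Decomposition Theorem (available in characteristic zero for projective morphisms), semismallness forces $p_*$ to send IC extensions to IC extensions without extra cohomological shifts, i.e.\ $p_*$ preserves the perverse $t$-structure, which is the one inherited by the equivariant categories $D(G\backslash G)$ and $D(G_I\backslash G_I)$.

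The main obstacle is the geometric step: verifying semismallness and descending it cleanly to the stack quotients, and confirming compatibility of the perverse and equivariant $t$-structures after taking the conjugation quotient. An alternative route, likely closer to \cite{BeYo}, is Braden's hyperbolic localization theorem applied to the $\mathbb G_m$-action on $G$ via $\mathrm{Ad}\circ\lambda$ for a regular cocharacter $\lambda$ defining $P_I$. Braden's theorem simultaneously yields the second adjointness and realizes $pres_I$ as a hyperbolic localization functor, which is known to preserve perverse sheaves; thus $t$-exactness falls out of essentially the same argument that establishes the second adjunction. Either way, once the geometric input is secured, the formal reduction above delivers $t$-exactness for all four parabolic functors at once.
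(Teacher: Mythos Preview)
The paper does not prove this theorem; it quotes it from \cite{BeYo}. Your sketch, however, does not recover the argument of \cite{BeYo}, and the main geometric step contains a genuine gap.

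Semismallness of a proper map $\pi$ controls $\pi_*$ only on very special inputs: it says that $\pi_*$ of the shifted constant sheaf (or, via the Decomposition Theorem, of $\textnormal{IC}_{\widetilde X}$) is perverse. It does \emph{not} imply that $\pi_*$ is $t$-exact on all perverse sheaves. A minimal counterexample: let $\pi:\textnormal{Bl}_0(\bbA^2)\to\bbA^2$ be the blow-up (semismall, since the exceptional fiber has dimension~$1$ over a codimension-$2$ point), and let $\calF=i_{E,*}(k_E[1])$ be the perverse pushforward of the shifted constant sheaf on the exceptional $E\cong\bbP^1$. Then $\pi_*\calF$ is the skyscraper at $0$ with stalk $H^*(\bbP^1)[1]=k[1]\oplus k[-1]$, sitting in perverse degrees $-1$ and $+1$. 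So ``sends IC extensions to IC extensions'' is simply not the same as ``preserves the perverse $t$-structure''. In your situation you must push forward $q^*\calF$ for \emph{arbitrary} perverse $\calF$ on $G_I\backslash G_I$, not just the constant sheaf, and this is exactly the regime where semismallness of the partial Grothendieck--Springer map (even if granted) tells you nothing.

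Two smaller points. First, for an adjunction $F\dashv G$ one has ``$F$ right $t$-exact $\Leftrightarrow$ $G$ left $t$-exact'', but \emph{not} ``$F$ left $t$-exact $\Leftrightarrow$ $G$ right $t$-exact''. The reduction to $pind$ can be salvaged by running the correct implication through all four adjunctions (for both $P_I$ and $P_I^-$), but as written the chain is broken. Second, the alternative via Braden does not close the gap either: Braden's theorem furnishes the isomorphism underlying the second adjunction and shows that hyperbolic localization preserves \emph{weights}, but preservation of perversity is a separate and strictly harder statement. The proof in \cite{BeYo} does pass through hyperbolic localization, but the $t$-exactness is extracted from an additional weight/purity argument (cf.\ \cite[\S 5.3]{BeYo}, also invoked later in the present paper): one reduces to simple, hence pure, perverse sheaves and plays the amplitude bounds coming from the two adjunctions against the weight filtration. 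That ingredient is exactly the content of the cited theorem and is absent from your outline.
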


\subsection{Character $D$-modules}

The functor $$ ch : D(G \backslash (G/U \times G/U) / T) \to D(G \backslash G)$$ is defined (perhaps up to a cohomological shift, which is irrelevant to us) as $p_* q^!$ where

\begin{equation}\label{eq basic cor 2}
\xymatrix{& G \ \backslash \ (G \times G/B) \ar[ld]_p \ar[rd]^q & \\ G \ \backslash \ G & & G \ \backslash \ (G/U \times G/U) / T}
\end{equation} (here $B \subset G$ is a Borel subgroup with unipotent radical $U$ and Levi quotient $T$, and the maps are $p(g , xB) = g , q(g , xB) = (xU , gxU)$). Notice that $p$ is proper and $q$ is smooth, hence $ch$ is proper.

\medskip

\begin{remark}\label{rem properties of ch}
	Denoting by $ch^I$ the analogous functor for $G_I$, similarly to remark \ref{rem properties of pind} one shows that $ pind_I \circ ch^I \cong ch \circ ?$ for some functor $?$, and also that $pres_I \circ ch$ can be glued from functors of the type $ch^I \circ ?$ where $?$ is some functor.
\end{remark}

\quash{One verifies that \begin{equation}\label{eq pind and ch} pind_I \circ ch^I\cong ch \circ r_* s^! \end{equation} where $$ \xymatrix{& P_I \ \backslash \ (P_I / B \times P_I / B)\ar[ld]_r \ar[rd]^s & \\ G \ \backslash \ (X \times X) & & G_I \ \backslash \ (X_I \times X_I)}. $$}

\begin{definition}
	The subcategory $\CH \subset D(G \backslash G)$ of \emph{character $D$-modules} is the subcategory generated under colimits by the image of $ch$ on $T$-monodromic objects.
\end{definition}

The following are standard properties:

\begin{lemma}\label{lem irr ch}\
	\begin{enumerate}
		\item The irreducible subquotients of cohomologies of any character $D$-module are again character $D$-modules.
		\item Every irreducible character $D$-module is of geometric origin (in particular, holonomic with regular singularities).
		\item Every compact character $D$-module has finite length.
		\item $\CH \subset D(G \backslash G)$ is closed under truncation.
	\end{enumerate}
\end{lemma}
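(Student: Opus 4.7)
The plan is to prove (4) first, then (3) and (2) via the compact generators, and finally (1) using the decomposition theorem together with Jordan--H\"older bookkeeping. I would start by observing that $\CH$ is compactly generated by $\{ ch(\calF) : \calF \text{ compact } T\text{-monodromic}\}$, since every $T$-monodromic object is a filtered colimit of compact ones and $ch$ is continuous. Because $ch = p_* q^!$ with $p$ proper and $q$ smooth of constant relative dimension, $ch$ is $t$-exact up to a uniform cohomological shift; as the $T$-monodromic heart is preserved under cohomology, each $H^n(ch(\calF))$ lies in the image of $ch$ applied to a monodromic object, hence in $\CH$. For (4), define $\CH'' \subset D(G \backslash G)$ as the full subcategory of $\calG$ with $\tau^{\leq n}\calG, \tau^{\geq n}\calG \in \CH$ for every $n$. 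Truncations commute with filtered colimits, and truncations of a cone fit into long exact sequences built from truncations of the inputs, so $\CH''$ is closed under all colimits; it contains the compact generators just identified, whence $\CH'' \supseteq \CH$, proving (4).

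Next I would prove (3) and (2). The functor $ch$ is proper because $p_*$ for proper $p$ and $q^!$ for smooth $q$ each preserve compactness, so the kernel on the product is compact. Compact $T$-monodromic objects are coherent, holonomic, with regular singularities, and $ch$ preserves holonomicity and regularity; a compact object of $D(G\backslash G)$ with holonomic regular cohomologies is automatically of finite length. Arbitrary compact objects of $\CH$ are retracts of finite iterated cones of the generators $ch(\calF)$, so they too are of finite length, yielding (3). For (2), compact $T$-monodromic objects are of geometric origin and $p_*, q^!$ preserve this class, so each $ch(\calF)$ is of geometric origin; the decomposition theorem then implies that each $H^n(ch(\calF))$ is semisimple, with simple summands being irreducible $D$-modules of geometric origin and regular singularities.

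For (1), let $\calF \in \CH$ and let $\calG$ be an irreducible subquotient of $H^n(\calF)$. Write $\calF = \varinjlim \calF_\alpha$ with each $\calF_\alpha$ compact in $\CH$, so $H^n(\calF) = \varinjlim H^n(\calF_\alpha)$ as a filtered colimit in the heart. Any irreducible subquotient of a filtered colimit of finite-length objects is a Jordan--H\"older constituent of some term, because a finitely generated subobject of the colimit factors through some $H^n(\calF_\alpha)$. Each $\calF_\alpha$ is itself a retract of a finite iterated cone of generators $ch(\calF'_i)$; the cohomologies $H^m(ch(\calF'_i))$ are semisimple with simple summands in $\CH$ by the previous paragraph, and Jordan--H\"older constituents propagate through short exact sequences (hence through cones via the long exact sequence) and retracts. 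Thus the constituents of $H^n(\calF_\alpha)$ lie in $\CH$, so $\calG \in \CH$. The main obstacle is this bookkeeping of Jordan--H\"older constituents along the finite resolution presenting a compact object of $\CH$; once one grants semisimplicity of the $H^m(ch(\calF'_i))$ from the decomposition theorem, the rest is a formal propagation through long exact sequences of cohomology.
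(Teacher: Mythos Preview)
The paper gives no proof of this lemma; it simply labels the four items ``standard properties'' (the content goes back to Lusztig's original work on character sheaves), so there is no argument to compare against. Your treatment of (2), (3), and the compact case of (1) is essentially fine, but two steps need repair.

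First, the claim that $ch$ is $t$-exact up to shift does not follow from ``$p$ proper and $q$ smooth'': properness gives $p_* = p_!$, but $p_*$ for a proper map generally has cohomological amplitude equal to the fiber dimension. What actually yields $H^n(ch(\calF)) \in \CH$ is the decomposition theorem you already invoke for (2): for compact $T$-monodromic $\calF$ of geometric origin, $ch(\calF)$ is a direct sum of shifted simples, each a retract of $ch(\calF)$ and hence in $\CH$. Second, and more seriously, your argument for (4) asserts that $\CH''$ is closed under cones because ``truncations of a cone fit into long exact sequences built from truncations of the inputs''. Truncation does not commute with cones: for a cofiber sequence $X \to Y \to Z$ with $X,Y \in \CH''$, the long exact sequence only expresses $H^n(Z)$ as an extension of a \emph{subobject} of $H^{n+1}(X)$ by a \emph{quotient} of $H^n(Y)$, and you have not shown $\CH \cap D(G\backslash G)^{\heartsuit}$ is closed under subobjects or quotients; even then, $H^n(Z)\in\CH$ for all $n$ does not directly give $\tau^{\le n}Z\in\CH$. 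The fix is to reorder: establish (2), (3), and the compact case of (1) first, so that every compact object of $\CH$ has finite length with all Jordan--H\"older constituents in $\CH$; then for $\calF \in \CH$ write $\calF=\varinjlim \calF_\alpha$ with $\calF_\alpha$ compact, observe each $H^k(\calF_\alpha)$ is a finite extension of its constituents hence in $\CH$, so $\tau^{\le n}\calF_\alpha$ (a finite Postnikov extension of the $H^k(\calF_\alpha)[-k]$) lies in $\CH$, and conclude $\tau^{\le n}\calF=\varinjlim \tau^{\le n}\calF_\alpha\in\CH$.
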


\begin{lemma}\label{lem pres pind preserve ss}
	The functors $pind_I$ and $pres_I$ preserve $\CHcdot$.
\end{lemma}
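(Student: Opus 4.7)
The plan is to exploit Remark \ref{rem properties of ch}, which packages the interaction between $ch$ and parabolic induction/restriction, combined with the continuity of $pind_I$ and $pres_I$ and the closure properties of $\CH$ and $\CHI$. Recall that by definition $\CH$ is the colimit-closure of $ch$ applied to $T$-monodromic objects (and analogously for $\CHI$), and that any subcategory in $Lin_k$ is by our convention closed under colimits, in particular under cofibers, so closed under fiber sequences in the stable setting.

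For $pind_I$: since $pind_I$ is continuous and $\CH$ is closed under colimits, to show $pind_I(\CHI) \subset \CH$ it suffices to show that the composition $pind_I \circ ch^I$ sends $T$-monodromic objects to $\CH$. Remark \ref{rem properties of ch} provides an isomorphism $pind_I \circ ch^I \cong ch \circ \,?$ for some intermediate functor $?$. Provided $?$ preserves $T$-monodromicity, this composition lands in the image of $ch$ on $T$-monodromic objects, hence in $\CH$ by definition.

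For $pres_I$: again by continuity of $pres_I$ and closure of $\CHI$ under colimits, it suffices to check that $pres_I \circ ch$ sends $T$-monodromic objects into $\CHI$. Remark \ref{rem properties of ch} exhibits $pres_I \circ ch$ as glued (in the sense of Definition \ref{def glue}) from functors of the form $ch^I \circ \,?$. If each layer sends $T$-monodromic objects into $\CHI$, then since $\CHI$ is closed under fiber sequences the glued object is also in $\CHI$.

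The main obstacle is therefore to verify that the intermediate functors $?$ appearing in Remark \ref{rem properties of ch} preserve $T$-monodromicity. Tracing through the constructions (ultimately the Mackey-style analysis of Remark \ref{rem properties of pind} and the explicit correspondences in \cite{Gu1}), these functors are built from $!$-pullbacks and $*$-pushforwards along $T$-equivariant maps between spaces carrying compatible $T$-actions — including transfer through parabolic quotients and conjugation by Weyl representatives $\dot w \in N_G(T)$ — so they manifestly respect the $T$-monodromy. Once this compatibility is spelled out, both assertions of the lemma follow.
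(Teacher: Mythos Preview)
Your proposal is correct and follows the same approach as the paper, which simply says ``This follows from remark \ref{rem properties of ch}.'' You have merely unpacked that one-line citation in detail, including the (reasonable) verification that the intermediate functors preserve $T$-monodromicity, which the paper leaves implicit.
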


\begin{proof}
	This follows from remark \ref{rem properties of ch}.
\end{proof}

\begin{lemma}\label{lem pres pind preserve ss}
	The functors $pind_I$ and $pres_I$ preserve $\CHcdot^{ss}$ (and hence also $\CHcdot^{fl}$).
\end{lemma}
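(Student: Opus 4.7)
The plan is to reduce to the case of a single irreducible $\mathcal{F} \in \CH^{\heartsuit}$, and then invoke the Beilinson--Bernstein--Deligne decomposition theorem through the property of being of geometric origin. By the definition of $\CH^{ss}$, the $t$-exactness of $pind_I$ and $pres_I$ (Theorem \ref{thm exactness}), and their commutation with finite direct sums and cohomological shifts, it is enough to verify the following: for every irreducible $\mathcal{F} \in \CH^{\heartsuit}$, the objects $pind_I(\mathcal{F})$ and $pres_I(\mathcal{F})$ are semisimple in $\CH^{\heartsuit}$.

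By Lemma \ref{lem irr ch}(2), such an $\mathcal{F}$ is of geometric origin. Both $pind_I = p_* q^!$ and $pres_I = q_* p^!$ are assembled from the standard six-functor operations along the correspondence \eqref{eq basic cor}, and therefore preserve the property of being of geometric origin (for $p^!$ along the projective map $p$, one writes $p^! \cong \mathbb{D} \circ p^* \circ \mathbb{D}$ and notes that each of $\mathbb{D}$ and $p^*$ preserves geometric origin). By the BBD decomposition theorem, every complex of geometric origin is isomorphic to a direct sum of cohomologically shifted simple perverse $D$-modules. Since $pind_I(\mathcal{F})$ and $pres_I(\mathcal{F})$ are concentrated in perverse degree $0$ by $t$-exactness, this decomposition collapses to a direct sum of simples in the heart; combined with the previous lemma (preservation of $\CH$) and Lemma \ref{lem irr ch}(1), these simple summands are character $D$-modules, so $pind_I(\mathcal{F}), pres_I(\mathcal{F}) \in \CH^{ss}$.

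The preservation of $\CH^{fl}$ is then a routine corollary of the preservation of $\CH^{ss}$. By $t$-exactness, the cohomologies of $pind_I(\mathcal{F})$ and $pres_I(\mathcal{F})$ coincide with the images under $pind_I$, $pres_I$ of the cohomologies of $\mathcal{F}$; and boundedness is preserved by $t$-exactness. A straightforward induction on Jordan--H\"older length in $\CH^{\heartsuit}$, using exactness and the already-proved fact that simples are sent to semisimples of finite length, then yields that finite-length objects go to finite-length objects.

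The step where one should be most careful is the verification that the individual ingredients of $pres_I$ preserve being of geometric origin --- in particular, the $!$-pullback $p^!$ along the projective map $p$, which does not preserve semisimplicity of a single input on its own. Reducing to the property ``of geometric origin,'' rather than tracking semisimplicity at each intermediate step, circumvents this issue; an alternative would be to use Braden's hyperbolic localization (the geometric input behind Theorem \ref{thm second adj}) to obtain a second presentation of $pres_I$ via the opposite parabolic and apply BBD there, but the geometric-origin framework is more economical.
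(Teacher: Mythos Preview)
Your argument for $pind_I$ is correct and matches the paper's: smooth pullback $q^! \cong q^*$ preserves semisimplicity and geometric origin, and then the proper pushforward $p_* = p_!$ of a semisimple object of geometric origin is semisimple by the decomposition theorem.

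The argument for $pres_I$, however, has a genuine gap. You correctly note that $p^!$ along the projective map $p$ does not preserve semisimplicity, and you propose to salvage this by tracking only the property ``of geometric origin'' through the composite $q_* p^!$, and then invoking ``the BBD decomposition theorem'' to conclude that any complex of geometric origin is a direct sum of shifted simples. That last step is false: being of geometric origin does \emph{not} imply semisimplicity. The decomposition theorem is a statement about the \emph{proper pushforward} of a semisimple perverse sheaf of geometric origin; it does not assert that arbitrary objects of geometric origin decompose. For a trivial counterexample, $j_! \calO$ for an open embedding $j$ is of geometric origin but typically not semisimple. In your situation the pushforward $q_*$ is along the Levi quotient $P \backslash P \to M \backslash M$, which is not proper, so the decomposition theorem does not apply to it; and the input $p^! \calF$ is already not known to be semisimple. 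Thus neither step of $q_* p^!$ is covered by BBD, and ``geometric origin'' alone does not close the gap.

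The paper handles $pres_I$ by a genuinely different mechanism: it invokes preservation of \emph{purity} (in the sense of weights), citing \cite[\S 5.3]{BeYo}. Purity is exactly the extra structure that forces semisimplicity after the functor is applied, and establishing that $pres_I$ preserves purity is the nontrivial input here (ultimately resting on hyperbolic localization / Braden's theorem and weight arguments). Your suggested alternative via hyperbolic localization is in the right spirit, but one still needs the weight/purity machinery rather than the bare decomposition theorem to conclude.
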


\begin{proof}
	
	In view of lemma \ref{lem irr ch}, every irreducible character $D$-module is of geometric origin. Hence by the decomposition theorem, $pind_I$ sends irreducible character $D$-modules to semisimple ones.
	
	\medskip
	
	To show that $pres_I$ preserves semisimplicity of character $D$-modules, one uses its preservation of purity - see \cite[\S 5.3]{BeYo}.
	
\end{proof}

\begin{lemma}\label{lem pind pind- same}
	The functors $pind_I$ and $pind_I^-$ induce the same map $$K_0 (\CHI^{fl}) \to K_0 (\CH^{fl}).$$
\end{lemma}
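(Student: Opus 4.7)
My plan is to compare $pind_I$ and $pind_I^-$ by testing multiplicities against irreducible character $D$-modules, exploiting that $pind_I$ is the \emph{left} adjoint of $pres_I$ by construction while $pind_I^-$ is its \emph{right} adjoint by the second adjointness (theorem \ref{thm second adj}). The whole argument hinges on the observation that the pairing $\dim \Hom(X, Y)$ on semisimple objects in the heart is symmetric, so both adjunctions will ultimately rewrite the relevant multiplicities in terms of the single quantity $\dim \Hom(F, pres_I H)$.

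First I would reduce to the case where $F \in \CHI^{fl}$ is irreducible, since $K_0(\CHI^{fl})$ is freely generated by isomorphism classes of irreducible character $D$-modules. For such $F$, combining $t$-exactness (theorem \ref{thm exactness}) with preservation of $\CHcdot^{ss}$ (lemma \ref{lem pres pind preserve ss}) shows that both $pind_I(F)$ and $pind_I^-(F)$ are semisimple and lie in the heart of $\CH$. Consequently their classes in $K_0(\CH^{fl})$ coincide if and only if
\[ \dim \Hom(pind_I F, H) = \dim \Hom(pind_I^- F, H) \]
for every irreducible $H \in \CH^{fl}$.

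To verify this last equality, I would rewrite both sides via adjunctions:
\[ \dim \Hom(pind_I F, H) = \dim \Hom(F, pres_I H), \qquad \dim \Hom(H, pind_I^- F) = \dim \Hom(pres_I H, F). \]
Here $pres_I H$ is again semisimple in the heart by the same $t$-exactness and semisimplicity-preservation inputs. Over the algebraically closed field $k$, Schur's lemma makes the pairing $\dim \Hom(X, Y)$ symmetric on semisimple objects in the heart, since in both orders it counts common irreducible summands with multiplicity. Therefore
\begin{align*}
\dim \Hom(pind_I^- F, H) &= \dim \Hom(H, pind_I^- F) = \dim \Hom(pres_I H, F) \\
&= \dim \Hom(F, pres_I H) = \dim \Hom(pind_I F, H).
\end{align*}

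The only delicate point in this plan is the bookkeeping ensuring that all intermediate objects stay in the heart and remain semisimple, so that the symmetry of $\dim \Hom$ is legitimate; this is entirely supplied by theorem \ref{thm exactness} and lemma \ref{lem pres pind preserve ss}, and no further geometric input is required. In particular, none of the deeper results on $\DG$ or the Vinberg degeneration are needed for this lemma.
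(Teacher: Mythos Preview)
Your proposal is correct and follows essentially the same approach as the paper: reduce to irreducibles, use $t$-exactness and semisimplicity-preservation to stay in the heart, and then combine the two adjunctions $(pind_I \dashv pres_I)$ and $(pres_I \dashv pind_I^-)$ with the symmetry of $\dim\Hom$ on semisimple objects. The paper phrases the pivot as passing through the multiplicity $[pres_I(\calF):\calG]$ rather than invoking symmetry of $\dim\Hom$ twice, but this is only a cosmetic difference.
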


\begin{proof}
	
	Let $\calG \in \CHI^{\heartsuit}$ be irreducible; We want to show that $pind_I (\calG)$ and $pind^-_I (\calG)$ are equal in the $K_0$-group. Since these objects are in the heart (by theorem \ref{thm exactness}) and semisimple (by lemma \ref{lem pres pind preserve ss}), it is enough to show that for every irreducible $\calF \in \CH^{\heartsuit}$ we have $$ [pind_I (\calG) : \calF] = [pind^-_I (\calG) : \calF]$$ (where $[ - : \calF ]$ denotes the amount of times $\calF$ enters the semisimple $-$). And indeed: $$ [pind_I (\calG) : \calF] = \dim H^0 \textnormal{Hom} (pind_I (\calG) , \calF) = \dim H^0 \textnormal{Hom} (\calG , pres_I (\calF)) = [pres_I (\calF) : \calG] = $$ $$  = \dim H^0 \textnormal{Hom} (pres_I (\calF) , \calG) = \dim H^0 \textnormal{Hom} (\calF , pind_I^- (\calG)) = [ pind_I^- (\calG) : \calF]$$ (where we have also used $pres_I (\calF)$ being in the heart (by theorem \ref{thm exactness}) and semisimple (by lemma \ref{lem pres pind preserve ss})).
	
\end{proof}

\subsection{Decomposition w.r.t. cuspidal rank}

For $0 \leq i \leq |\Sigma | $, we denote by $\CH^{(\leq i)}  \subset \CH$ the subcategory generated under colimits by the images of the functors $pind_I$, where $|I| \leq i$. We also denote by $\CH^{(i)}$ the right-orthogonal of $\CH^{(\leq i-1)}$ in $\CH^{(\leq i)}$ (since the $pind_I$'s are proper, these again are subcategories in the sense of subsection \ref{subsec bg cat}). In particular, we set $\CH^{cusp} := \CH^{(|\Sigma |)}$ (the subcategory of \emph{cuspidal} objects).

\medskip

\begin{lemma}\label{lem irr uch cusp rank}
	Let $\calF \in \CH^{\heartsuit}$ be irreducible. Then there exists $I \subset \Sigma$ and a cuspidal irreducible $\calG \in \CHI^{\heartsuit}$ such that $\calF$ is isomorphic to a direct summand of $pind_I \calG$. One has then $\calF \in \CH^{(|I|)}$.
\end{lemma}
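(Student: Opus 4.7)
My plan is to induct on the semisimple rank $|\Sigma|$ of $G$. The base case $|\Sigma|=0$ is immediate: $G$ is a torus, $pind_{\emptyset}=\Id$, and $\CH^{cusp}=\CH$, so I take $I=\emptyset$ and $\calG=\calF$. For the inductive step, if $\calF$ is cuspidal I take $I=\Sigma$ and $\calG=\calF$ (noting $G_{\Sigma}=G$, $pind_{\Sigma}=\Id$, and $\CH^{(|\Sigma|)}=\CH^{cusp}$). Otherwise $\calF\notin(\CH^{(\leq|\Sigma|-1)})^{\perp}$, so some $pind_J\calH'$ with $|J|<|\Sigma|$ and compact $\calH'\in\textnormal{CH}(G_J)$ admits a nonzero map to $\calF$; this reduction uses that $\CH^{(\leq|\Sigma|-1)}$ is generated under colimits by such objects while $\Hom(-,\calF)$ converts colimits to limits. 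Then $t$-exactness of $pind_J$ (Theorem \ref{thm exactness}) together with $\calF\in\CH^{\heartsuit}$ lets me replace $\calH'$ by $H^0\calH'$, which has finite length in $\textnormal{CH}(G_J)^{\heartsuit}$ (Lemma \ref{lem irr ch}(3)); a short long-exact-sequence argument then extracts an irreducible subquotient $\calG_0$ of $H^0\calH'$ with $\Hom(pind_J\calG_0,\calF)\neq 0$.

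Since $pind_J\calG_0$ is semisimple in the heart by Lemma \ref{lem pres pind preserve ss} and $\calF$ is irreducible, $\calF$ is a direct summand of $pind_J\calG_0$. I then apply the inductive hypothesis to $\calG_0$ inside $G_J$ (of strictly smaller semisimple rank) to find $K\subset J$ and a cuspidal irreducible $\calG''\in\textnormal{CH}(G_K)^{\heartsuit}$ with $\calG_0$ a summand of $pind_K^J\calG''$. Transitivity $pind_J\circ pind_K^J\cong pind_K$ (Remark \ref{rem properties of pind}) then exhibits $\calF$ as a summand of $pind_K\calG''$, which concludes the first assertion with $I:=K$.

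For the second assertion $\calF\in\CH^{(|I|)}$, inclusion in $\CH^{(\leq|I|)}$ is clear, and the orthogonality $\calF\in(\CH^{(\leq|I|-1)})^{\perp}$ reduces successively — via the compact generation of $\CH^{(\leq|I|-1)}$, the adjunction $pind_J\dashv pres_J$, and $\calF$ being a summand of $pind_I\calG$ — to showing $pres_J\,pind_I\,\calG=0$ for every $|J|<|I|$. The Mackey-type decomposition (Remark \ref{rem properties of pind}) glues $pres_J\circ pind_I$ from layers of the form $pind_{K_2}^J\circ(-)\circ pres_{K_1}^I$ with $|K_1|=|K_2|\leq|J|<|I|$, so $K_1\subsetneq I$ in every layer. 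Cuspidality of $\calG$ gives $\Hom(pind_{K_1}^I\calH'',\calG)=0$ for all $\calH''\in\textnormal{CH}(G_{K_1})$; by the adjunction $pind_{K_1}^I\dashv pres_{K_1}^I$ this is $\Hom(\calH'',pres_{K_1}^I\calG)=0$, and since $pres_{K_1}^I\calG\in\textnormal{CH}(G_{K_1})$ by Lemma \ref{lem pres pind preserve ss}, testing against itself forces $pres_{K_1}^I\calG=0$. Every Mackey layer therefore vanishes on $\calG$, as desired.

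The step I expect to be the main obstacle is the Mackey analysis: one must read the decomposition precisely enough to see that each layer contains a factor annihilated by cuspidality of $\calG$. Should some layers involve the opposite-parabolic restriction $pres_{K_1}^{I,-}$ in place of $pres_{K_1}^I$, I would use the second adjointness (Theorem \ref{thm second adj}) to rephrase vanishing of $pres_{K_1}^{I,-}\calG$ as vanishing of $\Hom(\calG,pind_{K_1}^I\calH'')$ for all $\calH''$: any nonzero such map would, by semisimplicity of $pind_{K_1}^I\calH''$ and irreducibility of $\calG$, exhibit $\calG$ as a summand of an object in $\textnormal{CH}(G_I)^{(\leq|K_1|)}\subset\textnormal{CH}(G_I)^{(\leq|I|-1)}$, contradicting that a nonzero cuspidal object is right-orthogonal to that subcategory.
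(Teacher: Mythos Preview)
Your proof is correct and follows the same underlying strategy as the paper: exhibit $\calF$ as a summand of $pind_I$ of a cuspidal irreducible by descending through parabolic inductions (using semisimplicity of $pind$ on irreducibles and transitivity $pind_I\circ pind_K^I\cong pind_K$), and then invoke the Mackey decomposition of Remark~\ref{rem properties of pind} for the orthogonality assertion. The difference is purely in bookkeeping. The paper chooses a \emph{minimal} $I$ for which $\calF$ is a direct summand of $pind_I\calG$ with $\calG$ irreducible, and then argues by contradiction that $\calG$ is cuspidal: if not, an irreducible quotient $\calH$ of some $pres_J^I\calG$ with $J\subsetneq I$ yields, by adjunction and semisimplicity, $\calG$ as a summand of $pind_J^I\calH$, and hence $\calF$ as a summand of $pind_J\calH$, violating minimality. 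Your induction on $|\Sigma|$ achieves the same descent, but requires the extra step of extracting an irreducible subquotient $\calG_0$ from a compact $\calH'$ via $t$-exactness and finite length. The minimality argument is a bit slicker; your version is more explicit but equally valid.
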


\begin{proof}
		
	Consider a minimal $I$ for which there exists irreducible $\calG \in \CHI^{\heartsuit}$ such that $\calF$ is a direct summand of $pind_I \calG$ (such $I$ exists because $\Sigma$ always suits). We want to show that $\calG$ is cuspidal. Otherwise, we would have $J \subsetneq I$ such that $pres^I_J \calG \neq 0$. Taking an irreducible quotient $pres^I_J \calG \to \calH$, by adjunction we get a non-zero map $\calG \to pind^I_J \calH$. By semisimplicity, $\calG$ is a direct summand of $pind^I_J \calH$ and hence $pind_I \calG$ is a direct summand of $pind_I pind^I_J \calH \cong pind_J \calH$, so $\calF$ is a direct summand of $pind_I \calH$, contradicting the minimality of $I$.
	
	\medskip
	
	Let $\calF$ be as above. Clearly $\calF \in \CH^{(\leq |I|)}$. Moreover, from remark \ref{rem properties of pind} we see that $\calF$ is in the right-orthogonal to $\CH^{(\leq |I|-1)}$.

\end{proof}

\begin{definition}
	Let $\calF \in \CH^{\heartsuit}$ be irreducible. We define the \emph{cuspidal rank} of $\calF$ as the integer $0 \leq i \leq |\Sigma|$ for which $\calF \in \CH^{(i)}$.
\end{definition}

\begin{lemma}\label{lem no Hom between different cusp rank}
	Let $\calF_1 , \calF_2 \in \CH^{\heartsuit}$ be irreducibles of cuspidal ranks $i_1 , i_2$. If $i_1 \neq i_2$, then $\textnormal{Hom} (\calF_1 , \calF_2) = 0$.
\end{lemma}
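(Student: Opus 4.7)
The first case, $i_1 < i_2$, is immediate from the definition of $\CH^{(i_2)}$ as a right-orthogonal: one has $\calF_1 \in \CH^{(\leq i_1)} \subset \CH^{(\leq i_2 - 1)}$, while $\calF_2 \in \CH^{(i_2)}$ is right-orthogonal to this, so $\textnormal{Hom}(\calF_1, \calF_2) = 0$.

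The case $i_1 > i_2$ is less direct, since the definition of $\CH^{(i_1)}$ as a right-orthogonal only controls maps \emph{into} $\calF_1$, not out of it. My plan is to bridge this via second adjointness, after first establishing a symmetry between $pind_I$ and $pind_I^-$ on irreducibles. Specifically, the key auxiliary observation is that for every irreducible $\calH \in \CH(G_I)^{\heartsuit}$ there is an isomorphism $pind_I \calH \cong pind_I^- \calH$: indeed, both objects lie in $\CH^{\heartsuit}$ by $t$-exactness (theorem \ref{thm exactness}), both are semisimple (lemma \ref{lem pres pind preserve ss}), and both have the same class in $K_0(\CH^{fl})$ by lemma \ref{lem pind pind- same}; since semisimple finite-length objects are determined up to isomorphism by their $K$-class, the isomorphism follows.

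With this in hand, I would apply lemma \ref{lem irr uch cusp rank} to write $\calF_2$ as a direct summand of $pind_{I_2}\calG_2 \cong pind_{I_2}^- \calG_2$ for some $I_2 \subset \Sigma$ with $|I_2| = i_2 < i_1$ and some irreducible cuspidal $\calG_2 \in \CH(G_{I_2})^{\heartsuit}$. Then, by second adjointness (theorem \ref{thm second adj}),
$$\textnormal{Hom}(\calF_1, \calF_2) \hookrightarrow \textnormal{Hom}(\calF_1, pind_{I_2}^- \calG_2) \cong \textnormal{Hom}(pres_{I_2}\calF_1, \calG_2),$$
so it suffices to show $pres_{I_2}\calF_1 = 0$. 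But $\calF_1 \in \CH^{(i_1)}$ is right-orthogonal to $\CH^{(\leq i_1 - 1)}$ and $|I_2| \leq i_1 - 1$, so $\textnormal{Hom}(pind_{I_2}\calH, \calF_1) = 0$ for every $\calH \in \CH(G_{I_2})$; by the $pind_{I_2} \dashv pres_{I_2}$ adjunction, $\textnormal{Hom}(\calH, pres_{I_2}\calF_1) = 0$ for every $\calH$, and taking $\calH = pres_{I_2}\calF_1$ forces $pres_{I_2}\calF_1 = 0$.

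The main subtlety is the auxiliary isomorphism $pind_I \calH \cong pind_I^- \calH$ on irreducibles, which is what converts the right-orthogonality supplied by the definition of cuspidal rank into the left-orthogonality we actually need in the $i_1 > i_2$ case. Once that is in place, the rest is simply an adjunction calculation.
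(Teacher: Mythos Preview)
Your argument is correct, but it takes a longer path than the paper's. In the case $i_1 > i_2$, the paper does not first prove the isomorphism $pind_I \calG \cong pind_I^- \calG$ for irreducibles; instead it uses the \emph{other} form of second adjointness, namely $pres_I^- \dashv pind_I$ (obtained from theorem~\ref{thm second adj} by swapping $P_I$ and $P_I^-$). Since $\calF_2$ is a summand of $pind_{I_2} \calG_2$, one gets directly
\[
\textnormal{Hom}(\calF_1, pind_{I_2} \calG_2)\cong \textnormal{Hom}(pres_{I_2}^-\,\calF_1,\calG_2),
\]
and $pres_{I_2}^- \calF_1 = 0$ because $\calF_1$ is right-orthogonal to $\CH^{(\le i_1 -1)}$, which also contains the image of $pind_{I_2}^-$. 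So the paper reaches the conclusion in one adjunction step, whereas you spend an extra step (via lemma~\ref{lem pind pind- same} and semisimplicity) converting $pind_{I_2}$ into $pind_{I_2}^-$ before applying $pres_{I_2} \dashv pind_{I_2}^-$. Your detour has the mild advantage that the vanishing $pres_{I_2}\calF_1 = 0$ follows from the \emph{definition} of $\CH^{(\le i)}$ in terms of $pind_J$ alone, without needing to observe that $pind_J^-$ also has image in $\CH^{(\le |J|)}$; but the paper's route is shorter and avoids invoking the decomposition theorem and purity (which underlie lemma~\ref{lem pres pind preserve ss} and hence your auxiliary isomorphism).
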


\begin{proof}
	If $i_1 < i_2$, the claim is immediate. Suppose that $i_1 > i_2$. By lemma \ref{lem irr uch cusp rank}, we can find $|I| = i_2$ and irreducible cuspidal $\calG \in \CHI^{\heartsuit}$ such that $\calF_2$ is a direct summand of $pind_I (\calG)$. Thus it is enough to show that $\textnormal{Hom} (\calF_1 , pind_I (\calG) ) = 0$. By second adjunction, this $\textnormal{Hom}$ is the same as $\textnormal{Hom} (pres^-_I \calF_1 , \calG)$, and hence it is zero since $pres^-_I \calF_1 = 0$.
\end{proof}

Let us denote by $$ inc^{(\leq i-1)} : \CH^{(\leq i - 1)} \to \CH^{(\leq i)} , \quad inc^{(i)} : \CH^{(i)} \to \CH^{(\leq i)} $$ the inclusion functors. Let us denote by $$ P^{(\leq i-1)} = inc^{(\leq i-1)} \circ (inc^{(\leq i-1)})^R, \quad P^{(i)} = inc^{(i)} \circ (inc^{(i)})^L$$ the corresponding ``projection" endo-functors of $\CH^{(\leq i)}$. One has a fiber sequence $$ P^{(\leq i-1)} \to \textnormal{Id} \to P^{(i)}.$$

\begin{lemma}\label{lem proj fiber seq splits}
	Let $\calF \in \CH^{(\leq i)}$ be of finite length. Then the fiber sequence $$ P^{(\leq i-1)}(\calF) \to \calF \to P^{(i)} (\calF)$$ splits.
\end{lemma}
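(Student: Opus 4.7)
The plan is to show that the fiber sequence is actually an orthogonal direct sum decomposition of $\calF$ by cuspidal rank. The key input is lemma \ref{lem no Hom between different cusp rank}, which --- being proved via second adjunction in our DG category --- gives vanishing of the full mapping complex $\textnormal{Hom}(\calF_1,\calF_2)$, not just its $H^0$; hence $\textnormal{Ext}^n(\calF_1, \calF_2)=0$ for every $n\in\bbZ$ between irreducibles of distinct cuspidal rank.

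Next, I would exploit this to produce a direct sum decomposition on finite-length objects. Vanishing of $\textnormal{Ext}^1$ in $\CH^{\heartsuit}$ between irreducibles of distinct cuspidal rank implies that every finite-length object of $\CH^{\heartsuit}$ splits in the abelian category as $\bigoplus_j \calF_j$, with $\calF_j$ having all composition factors of cuspidal rank $j$, and vanishing of the higher $\textnormal{Ext}$'s then propagates this to a direct sum decomposition $\CH^{fl} \cong \bigoplus_j \CH^{fl}_{(j)}$ in the DG category, where $\CH^{fl}_{(j)}$ denotes the finite-length objects whose composition factors all have cuspidal rank $j$. For our $\calF \in \CH^{(\leq i), fl}$ one has $\calF_j = 0$ when $j > i$: since $\CH^{(\leq i)}$ is a stable subcategory closed under colimits (hence under summands), each $\calF_j$ lies in $\CH^{(\leq i)}$; but $\CH^{(\leq i)}$ is generated by the essential image of $pind_I$ for $|I|\leq i$, whose composition factors (via lemma \ref{lem irr uch cusp rank} and the decomposition theorem used in lemma \ref{lem pres pind preserve ss}) all have cuspidal rank at most $i$, so dévissage via the Ext-vanishing forces $\textnormal{Hom}(\CH^{(\leq i)}, \CH^{fl}_{(j)}) = 0$ for $j > i$, in particular $\textnormal{Hom}(\calF_j, \calF_j) = 0$ and thus $\calF_j = 0$.

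Writing $\calF = \calF^{<i} \oplus \calF^{=i}$ accordingly, $\calF^{<i} = \bigoplus_{j<i}\calF_j$ lies in $\CH^{(\leq i-1)}$ while $\calF^{=i} = \calF_i$ lies in $\CH^{(\leq i)}$ and is right-orthogonal to $\CH^{(\leq i-1)}$ (again by the Ext-vanishing applied in both directions), hence belongs to $\CH^{(i)}$. By uniqueness of the adjoint projections in the semi-orthogonal decomposition $(\CH^{(\leq i-1)}, \CH^{(i)})$ of $\CH^{(\leq i)}$, this identifies $P^{(\leq i-1)}(\calF) = \calF^{<i}$ and $P^{(i)}(\calF) = \calF^{=i}$; the fiber sequence in the statement is then literally the direct sum decomposition, and so splits. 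The main obstacle I expect is the middle step --- carrying out the dévissage carefully enough to upgrade the abelian $\textnormal{Ext}$-vanishing into full DG mapping-complex vanishing between finite-length objects of distinct cuspidal ranks, and verifying that $\CH^{(\leq i)}$ interacts as claimed with the resulting decomposition of $\CH^{fl}$.
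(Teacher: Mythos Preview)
Your approach is correct and uses the same key ingredient as the paper---namely lemma \ref{lem no Hom between different cusp rank} together with d\'evissage on finite-length objects---but the organization is genuinely different. The paper argues more directly: it first observes that the projection functors $P^{(\leq i-1)}$ and $P^{(i)}$ behave trivially on irreducibles (one is the identity, the other zero, according to cuspidal rank), and by d\'evissage concludes that on any finite-length $\calF$ these projections are again finite-length with constituents of cuspidal rank $\leq i-1$ and $=i$ respectively; then the fiber sequence splits because the connecting map lies in $\textnormal{Hom}(P^{(i)}(\calF), P^{(\leq i-1)}(\calF)[1])$, which vanishes by lemma \ref{lem no Hom between different cusp rank}. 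You instead construct the splitting from scratch---building a cuspidal-rank decomposition of $\CH^{fl}$ and only afterwards identifying it with the semi-orthogonal projections. This is more work (you are essentially proving a finite-length version of theorem \ref{thm cusp decomp} en route), but it has the virtue of making the structural picture explicit. One small note: your closing sentence about ``upgrading the abelian $\textnormal{Ext}$-vanishing into full DG mapping-complex vanishing'' has the logic reversed---as you correctly observe at the start, the DG vanishing is what lemma \ref{lem no Hom between different cusp rank} gives directly, and the abelian $\textnormal{Ext}^1$-vanishing is a consequence.
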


\begin{proof}
	
	\emph{Step 1}: Let us show first that $P^{(i)} (\calF)$ (resp. $P^{(\leq i)}(\calF)$) is of finite length, with all irreducible constituents being cuspidal of rank $i$ (resp. $\leq i-1$). We reduce to $\calF$ being irreducible. Then if $\calF$ has cuspidal rank $i$, $\calF \to P^{(i)}(\calF)$ is an isomorphism. If $\calF$ has cuspidal rank $\leq i-1$, $P^{(\leq i-1)}(\calF) \to \calF$ is an isomorphism.
	
	\medskip
	
	\emph{Step 2}: The fiber sequence splits, since by the first step and by lemma \ref{lem no Hom between different cusp rank}, we have $$\textnormal{Hom} (P^{(i)}(\calF) , P^{(\leq i-1)} (\calF)) = 0.$$
\end{proof}

\begin{corollary}\label{cor inclusion proper}
	The inclusion $\CH^{(i)} \to \CH^{(\leq i)}$ is proper.
\end{corollary}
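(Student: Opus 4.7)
The plan is to show that the inclusion $inc^{(i)}: \CH^{(i)} \to \CH^{(\leq i)}$ preserves compact objects, which is equivalent to admitting a continuous right adjoint, i.e.\ to being proper. The strategy is to exhibit a set of compact generators of $\CH^{(i)}$ all of whose members are already compact in $\CH^{(\leq i)}$, and then to invoke the standard fact that a continuous functor preserving a set of compact generators preserves all compact objects (since the compact objects form the thick subcategory generated by any set of compact generators, and $inc^{(i)}$ automatically preserves finite colimits and retracts).

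The main step is to produce such generators by projecting compacts of $\CH^{(\leq i)}$ into $\CH^{(i)}$ via $P^{(i)}$. Given a compact $x \in \CH^{(\leq i)}$, I first note that $x$ is also compact in $\CH$: indeed, $\CH^{(\leq i)}$ is generated under colimits by objects of the form $pind_I (z)$ for $|I|\leq i$ and $z$ compact in $\CHI$, and these are compact in $\CH$ because $pind_I$ is proper by Corollary \ref{cor second adj}. Hence by Lemma \ref{lem irr ch}(3), $x$ has finite length, and Lemma \ref{lem proj fiber seq splits} yields a splitting $x \cong P^{(\leq i-1)}(x) \oplus P^{(i)}(x)$. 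This exhibits $P^{(i)}(x)$ as a direct summand of a compact object of $\CH^{(\leq i)}$, hence itself compact there; since $\CH^{(i)} \hookrightarrow \CH^{(\leq i)}$ is a fully faithful inclusion closed under colimits, $P^{(i)}(x)$ is equally compact as an object of $\CH^{(i)}$.

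Finally, I verify that the $P^{(i)}(x)$ generate $\CH^{(i)}$ under colimits. For any $y \in \CH^{(i)}$, write $y$ as a filtered colimit of compact objects $x_{\alpha}$ of $\CH^{(\leq i)}$; since $P^{(i)}$ is a left adjoint to $inc^{(i)}$ it preserves colimits, and $P^{(i)}(y) \cong y$, so $y$ is the filtered colimit of the $P^{(i)}(x_{\alpha})$, which lie in $\CH^{(i)}$ and are compact there by the previous step. The most delicate ingredient is really the splitting provided by Lemma \ref{lem proj fiber seq splits}, which rests in turn on the $\textnormal{Hom}$-vanishing of Lemma \ref{lem no Hom between different cusp rank}; once that is in hand the rest is essentially formal, with the only minor subtlety being the transfer of compactness between $\CH^{(\leq i)}$ and $\CH$, disposed of via properness of $pind_I$.
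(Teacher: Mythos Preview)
Your proof is correct and follows essentially the same approach as the paper: both arguments observe that $P^{(i)}$ applied to compact objects of $\CH^{(\leq i)}$ yields compact generators of $\CH^{(i)}$, and then use the splitting of Lemma \ref{lem proj fiber seq splits} (together with the fact that compact character $D$-modules have finite length) to see that $P^{(i)}(\calF)$ is a retract of $\calF$, hence compact in $\CH^{(\leq i)}$. You spell out a few points the paper leaves implicit (e.g.\ why compacts of $\CH^{(\leq i)}$ are compact in $\CH$, and why the $P^{(i)}(x)$ generate), but the substance is the same.
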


\begin{proof}
	The objects $ (inc^{(i)})^L (\calF)$, where $\calF \in \CH^{(\leq i)}$ are compact, are compact generators of $\CH^{(i)}$. Hence it is enough to show that, for compact $\calF \in \CH^{(\leq i)}$, the object $$ inc^{(i)} ((inc^{(i)})^L(\calF)) = P^{(i)}(\calF) \in \CH^{(\leq i)}$$ is compact. This follows from lemma \ref{lem proj fiber seq splits} (recall that compact objects in $\CH$ have finite length).
\end{proof}

\begin{proposition}\label{clm left and right orth}
	$\CH^{(i)}$ is the left-orthogonal of $\CH^{(\leq i-1)}$ in $\CH^{(\leq i)}$.
\end{proposition}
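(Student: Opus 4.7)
The plan is to use the second adjointness $pres_J \dashv pind^-_J$ of Theorem \ref{thm second adj} to translate between ``Hom into $\calF$'' and ``Hom out of $\calF$'' conditions. The crucial preliminary observation is that $\CH^{(\leq i-1)}$ also coincides with the subcategory of $\CH^{(\leq i)}$ generated under colimits by the images of $pind^-_J$ for $|J| \leq i-1$. Indeed, for irreducible $\calG' \in \CHI^{\heartsuit}$, both $pind_J \calG'$ and $pind^-_J \calG'$ lie in $\CH^{\heartsuit}$ (Theorem \ref{thm exactness}, applied also to the opposite parabolic) and are semisimple (Lemma \ref{lem pres pind preserve ss}), with equal classes in $K_0(\CH^{fl})$ by Lemma \ref{lem pind pind- same}; hence they are abstractly isomorphic, and in particular $pind^-_J \calG' \in \CH^{(\leq |J|)}$. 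Jordan-H\"older d\'evissage on compact (hence finite-length) objects followed by closure under colimits extends this to all $\calG' \in \CHI$, and the symmetric argument gives the reverse containment.

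For $\CH^{(i)} \subseteq L$: Take $\calF \in \CH^{(i)}$ and $\calG \in \CH^{(\leq i-1)}$. By Corollary \ref{cor inclusion proper}, $\CH^{(i)}$ is compactly generated and its compacts remain compact in $\CH^{(\leq i)}$; write $\calF = \mathrm{colim}_{\gamma} \calF_\gamma$ with each $\calF_\gamma$ compact. Right-orthogonality combined with the standard adjunction $pind_J \dashv pres_J$ forces $pres_J \calF_\gamma = 0$ for $|J| \leq i-1$. Writing $\calG = \mathrm{colim}_\alpha pind^-_J \calG'_\alpha$ via the preliminary step, compactness of $\calF_\gamma$ together with Theorem \ref{thm second adj} gives
\[ \textnormal{Hom}(\calF_\gamma, \calG) = \mathrm{colim}_\alpha \textnormal{Hom}(pres_J \calF_\gamma, \calG'_\alpha) = 0; \]
taking a limit over $\gamma$ yields $\textnormal{Hom}(\calF, \calG) = 0$.

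For $L \subseteq \CH^{(i)}$: Take $\calF \in L$. For any $|J| \leq i-1$ and $\calG' \in \CHI$, the object $pind^-_J \calG'$ lies in $\CH^{(\leq i-1)}$, so $\textnormal{Hom}(\calF, pind^-_J \calG') = 0$. By Theorem \ref{thm second adj} this equals $\textnormal{Hom}(pres_J \calF, \calG')$; setting $\calG' := pres_J \calF$ (which lies in $\CHI$ by Lemma \ref{lem pres pind preserve ss}) gives $pres_J \calF = 0$. Then for any $\calG \in \CH^{(\leq i-1)}$, expressing $\calG = \mathrm{colim}_\alpha pind_J \calG'_\alpha$ and using the standard adjunction $pind_J \dashv pres_J$ produces
\[ \textnormal{Hom}(\calG, \calF) = \lim_\alpha \textnormal{Hom}(\calG'_\alpha, pres_J \calF) = 0, \]
so $\calF \in \CH^{(i)}$.

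The main obstacle is the preliminary symmetry observation itself: without the two equivalent descriptions of $\CH^{(\leq i-1)}$, there is no natural way to compare the left- and right-orthogonals, since the left and right adjoints of $pind_J$ differ in general. Lemma \ref{lem pind pind- same} is precisely the ingredient that enables this comparison in the semisimple setting of character sheaves, after which the rest of the argument is a clean interplay between the standard and second adjunctions.
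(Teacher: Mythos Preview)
Your proof is correct and follows the same core idea as the paper's (reduce to compacts, apply the second adjunction). Two differences are worth noting.

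First, your preliminary observation---that $\CH^{(\leq i-1)}$ is equally generated by the $pind_J^-$---is precisely what justifies the step the paper leaves implicit. The paper writes $\Hom(\calF,pind_I\calH)\cong\Hom(pres_I^-\calF,\calH)$ and then asserts $pres_I^-\calF=0$; but from the definition of $\CH^{(i)}$ as a right-orthogonal one only gets $pres_I\calF=0$ immediately, and passing to $pres_I^-$ needs exactly the fact that $pind_I^-\calH\in\CH^{(\leq i-1)}$. You instead rewrite $\calG$ in terms of $pind_J^-$ up front and then use the immediately available vanishing $pres_J\calF=0$, which is cleaner. (Minor notational slip: your $\calG'$ should live in $\textnormal{CH}(G_J)$, not $\CHI$.)

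Second, your separate argument for $L\subseteq\CH^{(i)}$ is correct but unnecessary: once $\CH^{(i)}\subseteq L$ is known, the reverse inclusion is formal from the fiber sequence $P^{(\leq i-1)}\to\Id\to P^{(i)}$. Indeed, for $\calF\in L$ one has $P^{(i)}\calF\in\CH^{(i)}\subseteq L$, hence $P^{(\leq i-1)}\calF\in L\cap\CH^{(\leq i-1)}=0$, so $\calF\cong P^{(i)}\calF\in\CH^{(i)}$. This is why the paper only writes out one direction.
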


\begin{proof}
	Let $\calF \in \CH^{(i)}$ and $\calG \in \CH^{(\leq i-1)}$; We want to show that $\textnormal{Hom} (\calF , \calG) = 0$. One can assume that $\calF$ is compact in $\CH^{(i)}$, and hence in $\CH^{(\leq i)}$ by corollary \ref{cor inclusion proper}. This allows to assume that $\calG$ is compact. One then reduces to $\calG$ being of the form $pind_I (\calH)$ where $|I| \leq i - 1$ and $\calH$ is compact. By second adjointness one has $$ \textnormal{Hom} (\calF , pind_I (\calH)) \cong \textnormal{Hom} (pres_I^- (\calF) , \calH).$$ But $pres_I^- (\calF) = 0$, and we are done.
\end{proof}

\medskip

\begin{theorem}\label{thm cusp decomp}
	One has a direct sum decomposition $$ \CH = \bigoplus_{0 \leq i \leq |\Sigma |} \CH^{(i)}$$ compatible with the $t$-structure.
\end{theorem}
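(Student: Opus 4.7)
The approach is to prove the direct sum decomposition by induction on $i$, using the mutual orthogonality supplied by Proposition \ref{clm left and right orth} as the key input, and then address $t$-structure compatibility as a separate step.

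For the direct sum part, the plan is to upgrade Lemma \ref{lem proj fiber seq splits} from finite-length objects to \emph{all} objects. Given any $\calF \in \CH^{(\leq i)}$, the fiber sequence $P^{(\leq i-1)}(\calF) \to \calF \to P^{(i)}(\calF)$ is classified by the boundary morphism $P^{(i)}(\calF) \to P^{(\leq i-1)}(\calF)[1]$. Since subcategories are closed under shifts, $P^{(\leq i-1)}(\calF)[1] \in \CH^{(\leq i-1)}$ while $P^{(i)}(\calF) \in \CH^{(i)}$, so this boundary lives in a $\textnormal{Hom}$ group that vanishes by the left-orthogonality in Proposition \ref{clm left and right orth}. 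Hence the fiber sequence splits canonically for every $\calF$, yielding $\CH^{(\leq i)} \cong \CH^{(\leq i-1)} \oplus \CH^{(i)}$ as subcategories. Induction on $i$ then gives $\CH^{(\leq i)} = \bigoplus_{j \leq i} \CH^{(j)}$. At $i = |\Sigma|$ one has $\CH^{(\leq |\Sigma|)} = \CH$: since $P_\Sigma = G$, the functor $pind_\Sigma$ is the identity endofunctor of $\CH$, so the colimit closure defining $\CH^{(\leq |\Sigma|)}$ is all of $\CH$. Combining, $\CH = \bigoplus_i \CH^{(i)}$.

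For compatibility with the $t$-structure it suffices to show each $\CH^{(i)}$ is closed under $\tau^{\leq 0}$ and $\tau^{\geq 0}$ of $\CH$ (which preserve $\CH$ by Lemma \ref{lem irr ch}.4). By second adjointness (Theorem \ref{thm second adj}), membership in $\CH^{(i)}$ is equivalent to $\calF \in \CH^{(\leq i)}$ together with $pres_J^-(\calF) = 0$ for all $|J| < i$. The $t$-exactness of $pres_J^-$ (Theorem \ref{thm exactness} applied to $P_J^-$ in place of $P_J$) immediately propagates the vanishing condition through truncations, so it remains to show $\CH^{(\leq i)}$ itself is closed under truncations. For this I plan to exploit compact generation: $\CH^{(\leq i)}$ is compactly generated because each $pind_J$ preserves compactness (Corollary \ref{cor second adj}), and its compact objects are finite-length objects of $\CH$ (Lemma \ref{lem irr ch}.3). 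Extending the argument of Lemma \ref{lem no Hom between different cusp rank} to all shifts (which goes through verbatim using the same $pres_J^-$-vanishing), one gets vanishing of all $\textnormal{Ext}$ groups between irreducibles of distinct cuspidal ranks, so a compact object of $\CH^{(\leq i)}$ decomposes at the heart level by cuspidal rank with summands in $\CH^{(\leq i)}$ of rank $\leq i$. Its truncations therefore have the same property, and the general case follows by writing $\calF$ as a filtered colimit of compact objects and using $t$-exactness of filtered colimits.

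The main obstacle is the $t$-structure compatibility, and more specifically verifying that $\CH^{(\leq i)}$ itself is closed under truncations: the formal orthogonality argument powering the direct sum is clean, whereas controlling $\tau^{\leq 0}$ requires interlocking the $t$-exactness of the parabolic functors with the finite-length structure of compact objects and the passage to arbitrary colimits.
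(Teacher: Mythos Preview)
Your direct-sum argument is exactly the paper's: Proposition~\ref{clm left and right orth} makes $\CH^{(i)}$ both left- and right-orthogonal to $\CH^{(\leq i-1)}$ inside $\CH^{(\leq i)}$, so the recollement fiber sequence splits for every object and the filtration collapses.

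For the $t$-compatibility you take a longer detour than the paper, and there is a circularity in it. You assert that for compact $\calF\in\CH^{(\leq i)}$ each $H^n(\calF)$ decomposes by cuspidal rank ``with summands of rank $\le i$''. The Ext-vanishing you invoke gives the decomposition by rank, but not the bound on which ranks occur; to know no rank $>i$ appears you would need $H^n(\calF)\in\CH^{(\leq i)}$, which is precisely the truncation-closure you are proving. (The gap can be filled: compacts of $\CH^{(\leq i)}$ are retracts of finite colimits of $pind_J(c)$ with $|J|\le i$, and ``all cohomology constituents have rank $\le i$'' passes through such colimits via the long exact sequence. But that is not what you wrote.) A small side remark: the right-orthogonal to $\CH^{(\leq i-1)}$ is $\{pres_J=0,\ |J|<i\}$ via $pind_J\dashv pres_J$; your appeal to second adjointness and $pres_J^-$ there is unnecessary.

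The paper's one-line route avoids $\CH^{(\leq i)}$ entirely. One works with the \emph{decreasing} filtration: $\CH^{(\ge i)}:=\bigoplus_{j\ge i}\CH^{(j)}$ equals $\{\calF: pres_J\calF=0,\ |J|<i\}$, which is closed under truncation by Theorem~\ref{thm exactness}. Then for $\calF\in\CH^{(i)}$ one has $\tau^{\le 0}\calF\in\CH^{(\ge i)}$; its $\CH^{(\ge i+1)}$-component includes into $\tau^{\le 0}\calF$ and composes to zero in $\calF$ (orthogonality), hence factors through $(\tau^{>0}\calF)[-1]\in\CH^{\ge 2}$, hence is zero. Thus $\tau^{\le 0}\calF\in\CH^{(i)}$, and that is all that is needed.
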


\begin{proof} One splits the filtration $$ \CH^{(\leq 0)} \subset \ldots \subset \CH^{(\leq |\Sigma |)}$$ using proposition \ref{clm left and right orth}. The compatibility with the $t$-structure follows from the $t$-exactness of the $pres_I$'s.
\end{proof}

\begin{remark}
	See \cite{Gu2} for a statement, in the case of $G \backslash \mathfrak{g}$, which both generalizes to all $D$-modules (rather than character $D$-modules) and also takes into account the more refined ``cuspidal support" (versus only ``cuspidal rank").
\end{remark}

\section{The main results}

In this section we state the main results of this paper.

\subsection{Commutation with parabolic induction}

\begin{proposition}\label{clm DG commutes pind}
	The following diagram is commutative:

	$$ \xymatrix@R+1pc@C+2pc{ D(G \backslash G ) \ar[r]^{\DG} & D(G \backslash G) \\ D(G_I \backslash G_I) \ar[r]_{\DGI} \ar[u]^{pind_I^-} & D(G_I \backslash G_I) \ar[u]_{pind_I}} $$
\end{proposition}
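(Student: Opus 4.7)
The plan is to deduce this directly from the abstract commutation result claim \ref{clm DG commutes}, which asserts that $\DGj_{\calC} \circ F^R \cong F^L \circ \DGj_{\calD}$ for any functor $F : \calC \to \calD$ satisfying suitable adjoint-existence hypotheses. I specialize it with $\calC := D(G \backslash G)$, $\calD := D(G_I \backslash G_I)$, and $F := pres_I$.

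With this choice, the left adjoint $F^L$ is by definition $pind_I$, while the right adjoint $F^R$ is identified with $pind_I^-$ by the second adjointness theorem \ref{thm second adj}. Consequently the square produced by claim \ref{clm DG commutes} coincides on the nose with the square in the statement of the proposition. It then remains only to verify the technical hypotheses of claim \ref{clm DG commutes}: that $F^L$ and $F^{LL}$ exist, and that $F^R$ and $F^{RR}$ exist and are continuous. This is immediate from corollary \ref{cor second adj}, which asserts that $pres_I$ and $pind_I$ admit all iterated left and right adjoints; in particular $F^R = pind_I^-$ is proper, so $F^{RR}$ is continuous, and likewise $F^{LL}$ exists.

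Thus the argument is essentially a one-line specialization, and there is no real obstacle at this stage: all the substance has been absorbed into earlier results. The genuine content lies in theorem \ref{thm second adj} (second adjointness, proved via Braden hyperbolic localization) and in the formal property of $\DGj$ recorded as claim \ref{clm DG commutes} (that $\DGj$ intertwines left and right adjoints). What is conceptually pleasant about this derivation is that the switch from $P_I$ to $P_I^-$ on the parabolic induction side is not put in by hand, but is forced by the abstract behaviour of $\DGj$ together with the Bernstein-style second adjunction.
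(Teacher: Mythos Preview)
Your proposal is correct and follows exactly the same approach as the paper: apply claim \ref{clm DG commutes} with $F = pres_I$, using theorem \ref{thm second adj} to identify $F^R \cong pind_I^-$ and corollary \ref{cor second adj} to verify the hypotheses. The paper's proof is a single sentence to this effect; you have simply spelled out the details.
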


\begin{proof}
	Apply claim \ref{clm DG commutes} to $F = pres_I$ (taking into account theorem \ref{thm second adj} and corollary \ref{cor second adj}).
\end{proof}

\subsection{Filtration}

\begin{theorem}\label{thm main}
	For $0 \leq i \leq |\Sigma|$, denote $$ M_i := \bigoplus_{|I| = i} pind_I^- \circ pres_I [-d_I].$$ Then the functor $\DG$ is glued\footnote{see definition \ref{def glue}.} from the list of functors $ M_0 , \ldots , M_{|\Sigma|}$.
\end{theorem}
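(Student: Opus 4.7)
\emph{Proof plan.} My plan is to argue in the kernel formalism. Recall that $\DG$ is the functor with kernel $\Delta_! C \in D((G \backslash G)^2)$, where $\Delta : G \backslash G \to (G \backslash G)^2$ is the diagonal. For each $I \subset \Sigma$, the functor $pind_I^- \circ pres_I$ has kernel $K_I$ which, using theorem \ref{thm second adj} and the correspondences defining $pres_I$ and $pind_I^-$, is identified up to a cohomological shift with the $!$-pushforward of the constant $D$-module along the convolution stack
$$ Z_I := (P_I^- \backslash P_I^-) \times_{(G_I \backslash G_I)} (P_I \backslash P_I) \longrightarrow (G \backslash G)^2. $$
I will exhibit a filtration of $\Delta_! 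C$ whose $i$th successive cone is $\bigoplus_{|I|=i} K_I [-d_I]$; passing through the kernel formalism, this gives exactly the gluing of $\DG$ from $M_0, \ldots, M_{|\Sigma|}$ in the sense of definition \ref{def glue}.

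The filtration is produced geometrically using the Vinberg semigroup $\mathrm{Env}(G)$, which in the adjoint semisimple case reduces to the wonderful compactification $\bar{G}$. Concretely, I will construct a space $\bar{\calV}$ with a diagonal $G$-conjugation action and a morphism $\mu : G \backslash \bar{\calV} \to (G \backslash G)^2$ satisfying: (a) $\bar{\calV}$ admits a $G \times G$-equivariant stratification $\bar{\calV} = \bigsqcup_{I \subset \Sigma} \calV_I$ with $\calV_I \subset \overline{\calV_J}$ exactly when $I \subseteq J$; (b) the open stratum $\calV_\Sigma$ is $G$ and $\mu$ restricts on $G \backslash \calV_\Sigma$ to the diagonal $\Delta$; (c) for each $I$, the stack $G \backslash \calV_I$ is, up to a $Z(G_I)^\circ$-fibration, the convolution stack $Z_I$, and $\mu$ restricts there to the natural map $Z_I \to (G \backslash G)^2$. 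Both the codimension of $\calV_I$ in $\bar{\calV}$ and the fiber dimension of the $Z(G_I)^\circ$-fibration are governed by $\dim Z(G_I) = d_I$, and together they account for the shift $[-d_I]$.

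Granted this geometric model, the filtration is standard. Since $\calV_\Sigma$ is open, $\Delta_! C \cong \mu_! (j_{\Sigma , !} C_{G \backslash \calV_\Sigma})$, where $j_\Sigma$ is the open immersion. The stratification of $G \backslash \bar{\calV}$ yields a filtration of $j_{\Sigma , !} C_{G \backslash \calV_\Sigma}$ whose successive cones are the $!$-extensions of the constant sheaves from the individual strata $\calV_I$ for $I \subsetneq \Sigma$ (with the open stratum at the top). Strata with the same $|I|=i$ are pairwise incomparable under closure, so their contributions assemble into a single layer; applying $\mu_!$ and using (c) gives a filtration of $\Delta_! C$ whose $i$th layer is $\bigoplus_{|I|=i} K_I [-d_I]$, which is the kernel of $M_i$.

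The main obstacle is the geometric step, that is, the construction of $\bar{\calV}$ and the precise verification of (a)--(c). The identifications of the strata with $Z_I$-like stacks, along with tracking the correct shift $[-d_I]$, require a careful analysis of the $G \times G$-orbit structure of the Vinberg semigroup and its diagonal $G$-conjugation quotient; using the full Vinberg semigroup rather than only the wonderful compactification is essential for handling the non-adjoint case, where $Z(G)$ is nontrivial and must be correctly incorporated into the $Z(G_I)^\circ$-torsor structure of the strata.
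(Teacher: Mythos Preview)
Your overall architecture matches the paper's: work in the kernel formalism, realize $\Delta_! C$ via the Vinberg semigroup, stratify by $I \subset \Sigma$, and run a Cousin filtration whose layers are identified with the kernels of $pind_I^- \circ pres_I$. So this is not a different route; it is the paper's route, sketched.

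There is, however, a genuine gap at the step you call ``standard''. You write that the stratification yields a filtration of $j_{\Sigma,!} C$ whose successive cones are the $!$-extensions of the constant sheaves from the strata $\calV_I$. That is precisely the nontrivial content of the argument, and it is not automatic. The Cousin layers are $(i_I)_* i_I^! (j_{\Sigma,!}\,\wt{\tau}_! C)$, and one must actually compute $i_I^! (j_{\Sigma,!}\,\wt{\tau}_! C)$. For a general stratified space this can be arbitrarily complicated (it encodes the link of the stratum); nothing in your setup forces it to be a constant (or dualizing) sheaf in a single cohomological degree. The paper handles this in two moves you do not mention: first, the projection to $\bar{T}_{adj}/T$ is \emph{smooth}, so the computation can be done on that small base and then pulled back; second, on $\bar{T}_{adj}/T$ one invokes the contraction principle (Braden/hyperbolic localization) to evaluate $i^! j_! \tau_! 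C$ explicitly, obtaining $\sigma_* \omega[-d_I]$. Without the contraction step (or an equivalent argument) you have no control over the layers, and your sentence about the shift $[-d_I]$ coming from ``codimension plus the $Z(G_I)^\circ$-fibration'' is not a computation --- in fact the $[-d_I]$ in the paper arises from the identity $\sigma_! \cong \sigma_*[-d_I]$ on the base, not from a naive codimension count on the total space.

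A secondary point: the object being filtered is not $j_{\Sigma,!} C_{\calV_\Sigma}$ but $\wt{j}_! \wt{\tau}_! C$, i.e.\ the $!$-extension from the fiber over the single point $e_\Sigma$ inside the open stratum (after quotienting by $T$, the open base stratum is $BZ(G)$, not a point). This matters exactly in the non-adjoint case you flag at the end, and it interacts with how the shift $[-d_I]$ actually appears. Your last paragraph correctly identifies that the Vinberg semigroup (rather than the bare wonderful compactification) is needed here, but the mechanism by which $Z(G)$ enters is through $\tau$ and the base quotient, not through a $Z(G_I)^\circ$-torsor on the strata.
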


\begin{proof}
	The proof is postponed to section \ref{sec prf of thm main}.
\end{proof}

\begin{corollary}\label{clm DG id on cusp}
	The functor $\DGj_{D (G \backslash G)}$ is isomorphic to $\textnormal{Id}_{D(G \backslash G)} [-d_{\Sigma}]$ when restricted to $D(G \backslash G)^{cusp}$ (the subcategory consisting of objects $\calF$ for which $pres_I (\calF) = 0$ for all $I \neq \Sigma$).
\end{corollary}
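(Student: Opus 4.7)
The plan is to deduce this directly from Theorem \ref{thm main}, since cuspidality forces all but one layer of the gluing to vanish.

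First, I would unwind the term $M_{|\Sigma|}$. Since $P_\Sigma = P_\Sigma^- = G$, we have $G_\Sigma = G$ and the functors $pres_\Sigma$ and $pind_\Sigma^-$ are both the identity on $D(G \backslash G)$. Hence $M_{|\Sigma|} = \Id_{D(G\backslash G)}[-d_\Sigma]$. Moreover, for any $0 \le i < |\Sigma|$ and any $\calF \in D(G \backslash G)^{cusp}$, the functor $M_i$ applied to $\calF$ is a direct sum of terms $pind_I^- \circ pres_I(\calF)[-d_I]$ with $|I| = i < |\Sigma|$, each of which vanishes by the definition of $D(G \backslash G)^{cusp}$.

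Next, I would unpack Definition \ref{def glue} in the degenerate case where all but the last entry are zero. Evaluating the gluing fiber sequences on $\calF$, the innermost sequence $M_0(\calF) \to y_1 \to M_1(\calF)$ has both endpoints zero, forcing $y_1 = 0$. Proceeding by induction up the tower of sequences $y_{j-1} \to y_j \to M_j(\calF)$, one concludes $y_j = 0$ for every $j < |\Sigma|$. Then the outermost fiber sequence reads
\[ 0 \to \DG(\calF) \to M_{|\Sigma|}(\calF), \]
yielding an isomorphism $\DG(\calF) \cong M_{|\Sigma|}(\calF) = \calF[-d_\Sigma]$.

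I do not expect any genuine obstacle here; the argument is essentially a bookkeeping exercise in the definition of gluing, modulo the identification of the top layer $M_{|\Sigma|}$ with $\Id[-d_\Sigma]$. The only mild subtlety is that the statement asserts an isomorphism of functors, not just a pointwise isomorphism; this is automatic because the gluing in Theorem \ref{thm main} is a filtration of the functor $\DG$ itself (the fiber sequences are natural in $\calF$), so the vanishing of $y_j$ on cuspidal $\calF$ upgrades to a natural isomorphism $\DG|_{D(G\backslash G)^{cusp}} \cong \Id[-d_\Sigma]$.
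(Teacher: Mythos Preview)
Your argument is correct and is precisely the intended one: the paper states this as an immediate corollary of Theorem~\ref{thm main} without further proof, and your unwinding of Definition~\ref{def glue} makes explicit why all layers except $M_{|\Sigma|} = \Id[-d_\Sigma]$ vanish on cuspidal objects.
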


\begin{corollary}\label{cor DG preserves CH}
	The functor $\DG$ preserves the subcategory $\CH \subset D(G \backslash G)$.
\end{corollary}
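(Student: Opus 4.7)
The plan is to deduce the corollary formally from Theorem \ref{thm main} together with the closure properties of $\CH \subset D(G \backslash G)$, with essentially no new content needed beyond those two inputs.

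First I would check that each layer $M_i = \bigoplus_{|I| = i} pind_I^- \circ pres_I [-d_I]$ sends $\CH$ into $\CH$. By Lemma \ref{lem pres pind preserve ss} (first version), the functors $pres_I$ and $pind_I$ preserve $\CH$, and the exact same argument (swapping $P_I$ with $P_I^-$) applies to $pind_I^-$. Cohomological shifts preserve $\CH$ trivially, and finite direct sums are finite colimits, hence also preserve $\CH$. So $M_i(\CH) \subset \CH$ for every $i$.

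Next I would invoke closure. According to subsection \ref{subsec bg cat}, a subcategory is by convention a full subcategory closed under all colimits, so $\CH$ is a stable subcategory of $D(G\backslash G)$; in particular it is closed under cohomological shifts and cofibers. In any stable $\infty$-category a fiber sequence $A \to B \to C$ rotates to a cofiber sequence $C[-1] \to A \to B$, so $B \cong \operatorname{cofib}(C[-1] \to A)$. Hence if $A, C \in \CH$ then $B \in \CH$.

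Combining the two observations, for $\calF \in \CH$ the tower of fiber sequences guaranteed by Theorem \ref{thm main} and Definition \ref{def glue},
$$x_0 \to y_1 \to x_1,\quad y_1 \to y_2 \to x_2,\quad \ldots,\quad y_{n-1} \to \DG(\calF) \to x_n,$$
with $x_i := M_i(\calF)$ and $n := |\Sigma|$, has all $x_i \in \CH$ by the first step, so by induction on $i$ using the second step we obtain $y_1, y_2, \ldots, y_{n-1} \in \CH$ and finally $\DG(\calF) \in \CH$. I do not anticipate a real obstacle: the entire technical content is bundled into Theorem \ref{thm main}, and once it is granted this corollary is a purely formal manipulation of the gluing tower inside a colimit-closed stable subcategory.
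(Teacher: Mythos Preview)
Your proposal is correct and takes essentially the same approach as the paper: the paper's proof is simply ``Clear by theorem \ref{thm main} and lemma \ref{lem pres pind preserve ss},'' and you have unpacked exactly what that entails, namely that each layer $M_i$ preserves $\CH$ and that $\CH$, being a colimit-closed (hence stable) full subcategory, is closed under the extensions appearing in the gluing tower.
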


\begin{proof}
	Clear by theorem \ref{thm main} and lemma \ref{lem pres pind preserve ss}.
\end{proof}

\subsection{Invertibility}

\begin{proposition}\label{DG invertible}
	The functor $\DG$ is invertible.
\end{proposition}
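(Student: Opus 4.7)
The plan is to apply Corollary \ref{cor DG inv if prp} to the space $X = G \backslash G$, which reduces the statement to showing that $\DG$ admits a proper right adjoint, or equivalently that $\DG$ itself is proper, i.e.\ sends compact objects to compact objects. (Recall that $D(G \backslash G)$ is compactly generated and self-dual under Verdier duality, so both of the hypotheses required for Claim \ref{clm DG inv if prp} collapse to this single condition.)

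To establish properness of $\DG$, I would invoke Theorem \ref{thm main}, according to which $\DG$ is glued, in the sense of Definition \ref{def glue}, from the list of functors $M_0, \ldots, M_{|\Sigma|}$ with $M_i = \bigoplus_{|I| = i} pind_I^- \circ pres_I\,[-d_I]$. The class of proper functors is closed under finite direct sums, cohomological shifts, compositions, and under the gluing operation of Definition \ref{def glue} (since compact objects in a presentable stable category are closed under finite colimits and extensions, so fiber sequences in the value of a functor on a compact object stay among compact objects). It therefore suffices to prove that for every $I \subset \Sigma$, each of $pres_I$ and $pind_I^-$ is individually proper.

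Both of these facts are immediate consequences of the second adjointness results already established: by Corollary \ref{cor second adj}, the functors $pres_I$ and $pind_I$ (and hence also $pind_I^-$) admit all iterated left and right adjoints, and admitting a continuous right adjoint is precisely the definition of properness in this setting. Composing and gluing then yields properness of $\DG$, which completes the argument.

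There is no real obstacle at this step: the proposition is a formal consequence of Theorem \ref{thm main} combined with the abstract invertibility criterion of Corollary \ref{cor DG inv if prp}. All of the genuine content is absorbed into Theorem \ref{thm main}, whose proof (postponed to a later section) must resolve the kernel $\Delta_! C$ of $\DG$ via the wonderful compactification / Vinberg semigroup.
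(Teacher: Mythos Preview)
Your proof is correct and follows exactly the paper's approach: apply Corollary \ref{cor DG inv if prp} to reduce to properness of $\DG$, then deduce properness from Theorem \ref{thm main} together with Corollary \ref{cor second adj}. You have simply spelled out in more detail why gluing preserves properness, which the paper leaves implicit.
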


\begin{proof}
	By corollary \ref{cor DG inv if prp}, it is enough to show that $\DG$ admits a proper right adjoint. This follows from theorem \ref{thm main}, since by corollary \ref{cor second adj} $pind^-_I$ and $pres_I$ admit all iterated adjoints.
	\quash{From theorem \ref{thm cusp decomp} it follows that $D(G \backslash G)$ is generated by compact objects of the form $pind_I \calG$ where $\calG$ is compact and cuspidal. Thus it is enough to check that $\DG (pind_I \calG)$ is compact. But $\DG (pind_I \calG) \cong pind^-_I (\DGj_{D(G_I / G_I)} \calG) \cong pind^-_I (\calG [-d_I])$ so that the compactness follows.}
\end{proof}

\begin{proposition}\label{clm formula for inv}
	For $0 \leq i \leq |\Sigma|$, denote $$ N_i := \bigoplus_{|I| = i} pind_I \circ pres_I [d_I].$$ Then the functor $\DGinv$ is glued from the list of functors $ N_{|\Sigma |} , \ldots , N_{0}$.
\end{proposition}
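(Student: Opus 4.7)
The plan is to deduce the filtration of $\DGinv$ from that of $\DG$ in Theorem \ref{thm main} by passing to right adjoints. Since $\DG$ is invertible by Proposition \ref{DG invertible}, we have $\DGinv \cong \DG^R$, so it is enough to produce a suitable filtration of $\DG^R$. I would apply $(-)^R$ to the tower of fiber sequences expressing the gluing of $\DG$ from $M_0 , \ldots , M_{|\Sigma|}$. In a stable $(\infty,1)$-category, passage to right adjoints turns a fiber sequence of functors $F \to G \to H$ into a fiber sequence $H^R \to G^R \to F^R$ (right adjoints preserve fibers, and the mate construction reverses natural transformations). Splicing the resulting fiber sequences via iterated use of the octahedral axiom yields a gluing of $\DG^R$ from the list $M_{|\Sigma|}^R , \ldots , M_0^R$.

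Next, compute $M_i^R$. By Theorem \ref{thm second adj} one has $(pres_I)^R \cong pind_I^-$, and the basic adjunction $pind_I^- \dashv pres_I^-$ gives $(pind_I^-)^R \cong pres_I^-$. Hence
\[ M_i^R \cong \bigoplus_{|I|=i} pind_I^- \circ pres_I^- \, [d_I]. \]
It remains to identify the right-hand side with $N_i$, which I would do via the $P_I \leftrightarrow P_I^-$ symmetry: let $\dot{w}_0 \in N_G(T_{sub})$ be a lift of the longest element of $W$. The inner automorphism $\textnormal{Ad}_{\dot{w}_0} : G \to G$ sends $P_I$ to $P_{I^*}^-$ and $G_I$ to $G_{I^*}$, where $I \mapsto I^* := -w_0(I)$ is the Dynkin involution on subsets of $\Sigma$, and it descends to the identity on $G \backslash G$. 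Transporting the correspondence defining $pind_I \circ pres_I$ through $\textnormal{Ad}_{\dot{w}_0}$ yields a natural isomorphism $pind_I \circ pres_I \cong pind_{I^*}^- \circ pres_{I^*}^-$ (the pullback and pushforward along the induced isomorphism $G_I \to G_{I^*}$ cancel in the middle). Re-indexing the sum $N_i$ by $J = I^*$, and using $|J| = |I|$ and $d_J = d_I$, one obtains $N_i \cong M_i^R$.

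The step I expect to be most delicate is the last one: tracing the correspondence diagrams defining $pind_I$ and $pres_I$ under $\textnormal{Ad}_{\dot{w}_0}$ to verify the natural isomorphism $pind_I \circ pres_I \cong pind_{I^*}^- \circ pres_{I^*}^-$, and checking that the Dynkin-involution reindexing is compatible with the direct sum decomposition. Everything else is a formal consequence of invertibility, second adjointness, and the basic $pind \dashv pres$ adjunction.
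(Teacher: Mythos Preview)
Your argument is correct, but the paper's one-line proof takes a shorter route: it applies \emph{left} adjoints rather than right adjoints to the gluing in Theorem~\ref{thm main}. Since $(pres_I)^L \cong pind_I$ (the basic adjunction) and $(pind_I^-)^L \cong pres_I$ (second adjointness, Theorem~\ref{thm second adj}), one gets
\[
M_i^L \;\cong\; \bigoplus_{|I|=i} pind_I \circ pres_I\,[d_I] \;=\; N_i
\]
on the nose, with no need for any $w_0$-conjugation. Your right-adjoint computation yields $M_i^R \cong \bigoplus_{|I|=i} pind_I^- \circ pres_I^-\,[d_I]$, which you then correctly identify with $N_i$ via the longest element and the Dynkin involution; this works, but it is an avoidable detour. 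The octahedral reorganization of the adjoint fiber sequences into the ``glued from'' shape is the same in either approach (and is left implicit in the paper).
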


\begin{proof}
	Apply left adjoints to the gluing of theorem \ref{thm main}.
\end{proof}

\subsection{Deligne-Lusztig involution}

\begin{proposition}\label{prp DG strat-exact}
	Let $0 \leq i \leq |\Sigma |$.
	
	\begin{enumerate}
		\item The functor $\DG$ preserves $\CH^{(i)}$.
		\item The functor $\DG [d_i]$ is $t$-exact when restricted to $\CH^{(i)}$.
	\end{enumerate} 
\end{proposition}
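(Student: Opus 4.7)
\emph{Plan.}

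\emph{Part (1).} Starting from the ``swapped'' commutation $\DG \circ pind_J \cong pind_J^- \circ \DGj_{D(G_J \backslash G_J)}$ (Proposition \ref{clm DG commutes pind} with $P_J$ and $P_J^-$ interchanged; the proof of Claim \ref{clm DG commutes} is symmetric in the two), I combine it with Corollary \ref{clm DG id on cusp} applied to $G_J$: for $|J| = i$ and an irreducible cuspidal $\calG \in \CHI^{\heartsuit}$, $\DGj_{D(G_J \backslash G_J)}(\calG) \cong \calG[-d_i]$, yielding
\[
\DG(pind_J(\calG)) \cong pind_J^-(\calG)[-d_i].
\]
The same Mackey-plus-cuspidality adjointness argument used in the proof of Lemma \ref{lem irr uch cusp rank} places $pind_J(\calG)$, and symmetrically $pind_J^-(\calG)$ (after using conjugation by a lift of the longest Weyl element to identify $pind_J^-$ with $pind_{J'}$ for some size-$i$ subset $J' \subset \Sigma$, so that $\mathrm{Im}(pind_J^-) \subset \CH^{(\le i)}$), in $\CH^{(i)}$. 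By Lemma \ref{lem irr uch cusp rank}, every irreducible of $\CH^{(i), \heartsuit}$ is a direct summand of some such $pind_J(\calG)$, so $\CH^{(i)}$ is generated under colimits (and shifts) by these objects; continuity of $\DG$ then gives $\DG(\CH^{(i)}) \subset \CH^{(i)}$.

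\emph{Part (2), right $t$-exactness.} For $\calF \in \CH^{(i)}$, the defining right-orthogonality of $\CH^{(i)}$ to $\CH^{(\le i-1)}$, combined with the adjunction $pind_I \dashv pres_I$, forces $pres_I(\calF) = 0$ for every $|I| < i$. Hence in the filtration of Theorem \ref{thm main}, the layer $M_j(\calF) = \bigoplus_{|I|=j} pind_I^-(pres_I(\calF))[-d_j]$ vanishes for $j < i$; for $j \ge i$ and $\calF \in \CH^{(i), \le 0}$, the $t$-exactness of $pres_I$ and $pind_I^-$ (Theorem \ref{thm exactness}) puts $M_j(\calF)$ in $D^{\le d_j}$, and so $M_j(\calF)[d_i] \in D^{\le d_j - d_i} = D^{\le i - j} \subset D^{\le 0}$. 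Closure of $D^{\le 0}$ under fiber sequences then gives $\DG(\calF)[d_i] \in D^{\le 0}$.

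\emph{Part (2), heart preservation and conclusion.} The calculation from part (1) specializes to $\DG[d_i](pind_J(\calG)) \cong pind_J^-(\calG) \in \CH^{(i), \heartsuit}$ (the latter by Theorem \ref{thm exactness}), so every irreducible $\calF \in \CH^{(i), \heartsuit}$—a direct summand of such a $pind_J(\calG)$—satisfies $\DG[d_i](\calF) \in \CH^{(i), \heartsuit}$. Closure of the heart under triangulated extensions propagates heart preservation to finite-length objects; continuity of $\DG[d_i]$, together with compact generation of $\CH^{(i)}$ by finite-length objects (Lemma \ref{lem irr ch}), extends it via filtered colimits to all of $\CH^{(i), \heartsuit}$. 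A standard Postnikov-tower induction—on cohomological amplitude for bounded $\calF \in \CH^{(i), \ge 0}$, via $\calF = \mathrm{hocolim}_n \tau_{\le n} \calF$ for unbounded $\calF$—upgrades right $t$-exactness plus heart preservation to full $t$-exactness. The heart-preservation step is the main obstacle: the filtration of Theorem \ref{thm main} by itself yields only the right-$t$-exact half (and the analogous filtration of $\DGinv$ in Proposition \ref{clm formula for inv} produces only the logically equivalent statement ``$\DGinv[-d_i]$ left $t$-exact''), so the complementary bound has to come from the sharper cuspidal identification in Corollary \ref{clm DG id on cusp}.
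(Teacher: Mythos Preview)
Your argument is correct, but it is organized differently from the paper's and does more work than necessary.

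For part (1), the paper simply cites the commutation of Proposition \ref{clm DG commutes pind}: since $\DG$ intertwines $pind_I$ and $pind_I^-$ (in both directions), it preserves each $\CH^{(\le i)}$; invoking invertibility (Proposition \ref{DG invertible}) and the fact that $\CH^{(i)}$ is the orthogonal of $\CH^{(\le i-1)}$ in $\CH^{(\le i)}$ finishes it. Your version is more explicit---you push the computation all the way down to cuspidal $\calG$ via Corollary \ref{clm DG id on cusp} and then argue that $pind_J^-(\calG) \in \CH^{(i)}$ by the Weyl-conjugation trick---which is fine but not needed.

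For part (2), the real contrast is in how you obtain the two halves of $t$-exactness. The paper proves \emph{left} $t$-exactness of both $\DG[d_i]$ and $\DGinv[-d_i]$ directly: reduce to irreducible $\calF$, write $\calF$ as a summand of $pind_I \calG$ with $\calG$ cuspidal, and compute $\DG(pind_I \calG) \cong pind_I^-(\calG)[-d_i] \in \CH^{\ge d_i}$ (and symmetrically for $\DGinv$). Since an equivalence whose inverse is also left $t$-exact is $t$-exact, this is the whole proof. You instead extract \emph{right} $t$-exactness from the filtration of Theorem \ref{thm main} (correctly noting that the dual filtration for $\DGinv$ gives no new information), and then obtain left $t$-exactness by first proving heart preservation on irreducibles---which is exactly the paper's computation---and then running a Postnikov/colimit induction to extend it. That induction is valid, but observe that your heart-preservation step already contains the seed of the paper's direct argument: once you know $\DG[d_i](pind_I \calG) \in \CH^{\heartsuit}$, the very same reduction-to-irreducibles shows $\DG[d_i]$ is left $t$-exact outright, making the detour through the filtration for right $t$-exactness unnecessary. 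In short, your right-$t$-exactness step via Theorem \ref{thm main} is a genuine alternative to the paper's $\DGinv$ computation, but the rest of your argument is the paper's argument wearing a longer coat.
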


\begin{proof}\
	\begin{enumerate}
		\item This follows from proposition \ref{clm DG commutes pind}.
		\item It is enough to show that if $\calF \in (\CH^{(i)})^{\ge 0}$ then $\DG (\calF) \in \CH^{\ge d_i}$ and $\DGinv (\calF) \in \CH^{\ge -d_i}$. Since the $t$-structure is compatible with filtered colimits, we can reduce to $\calF$ being of the form $\tau_{\ge 0} \calF^{\prime}$ where $\calF^{\prime}\in \CH^{(i)}$ is compact, and hence in particular to $\calF$ being of finite length. Thus we may reduce to $\calF$ being irreducible. By lemma \ref{lem irr uch cusp rank}, we may reduce to $\calF$ being of the form $pind_I \calG$ for some $|I| = i$ and cuspidal irreducible $\calG$. But then: $$\DG (pind_I \calG) \cong pind^-_I (\DGI (\calG))  \cong pind^-_I (\calG [-d_i]) \in \CH^{\ge d_i} $$ and similarly $$\DGinv (pind_I \calG) \cong pind^-_I (\DGIinv (\calG))  \cong pind^-_I (\calG [d_i]) \in \CH^{\ge -d_i}.$$
	\end{enumerate}
\end{proof}

\quash{\begin{corollary}\label{cor DG preserves fl}
	The functor $\DG$ preserves the property of being of finite $t$-length.
\end{corollary}}

\begin{corollary}
	We obtain an auto-bijection (the \emph{Deligne-Lusztig involution}) $$ \textnormal{DL} \lefttorightarrow \textnormal{Irr} \left( \CH^{\heartsuit} \right),$$ by sending an irreducible object $\calE \in (\CH^{(i)})^{\heartsuit}$ to $\DGj_{D( G \backslash G)}(\calE) [d_i]$.
\end{corollary}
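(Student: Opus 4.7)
The plan is to show that for each $0 \leq i \leq |\Sigma|$, the endofunctor $\DG[d_i]$ restricts to a $t$-exact auto-equivalence of $\CH^{(i)}$, hence induces a bijection on isomorphism classes of irreducibles in $(\CH^{(i)})^{\heartsuit}$, and then to assemble these bijections into a single auto-bijection of $\textnormal{Irr}(\CH^{\heartsuit})$ using the decomposition of Theorem \ref{thm cusp decomp}.

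\textbf{Step 1 (well-definedness).} By Theorem \ref{thm cusp decomp}, the $t$-structure on $\CH$ is compatible with the direct sum decomposition $\CH = \bigoplus_i \CH^{(i)}$. Hence every irreducible $\calE \in \CH^{\heartsuit}$ has a unique cuspidal rank $i$, namely the index for which $\calE \in (\CH^{(i)})^{\heartsuit}$. This makes the assignment $\calE \mapsto \DG(\calE)[d_i]$ unambiguous.

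\textbf{Step 2 (auto-equivalence of each stratum).} Proposition \ref{prp DG strat-exact}(1) says that $\DG$ preserves $\CH^{(i)}$. The same argument, using Proposition \ref{clm formula for inv} in place of Theorem \ref{thm main} (or directly the calculation $\DGinv(pind_I \calG) \cong pind_I^-(\calG[d_i])$ for cuspidal $\calG$ in $\CHI$ with $|I|=i$, which is established inside the proof of Proposition \ref{prp DG strat-exact}(2)), shows that $\DGinv$ also preserves $\CH^{(i)}$. Since $\DG$ is invertible on all of $D(G \backslash G)$ by Proposition \ref{DG invertible}, its restriction to the preserved subcategory $\CH^{(i)}$ is an auto-equivalence.

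\textbf{Step 3 (bijection on irreducibles).} Proposition \ref{prp DG strat-exact}(2) asserts that $\DG[d_i]$ is $t$-exact on $\CH^{(i)}$. Combined with Step 2, the shifted functor $\DG[d_i]\colon \CH^{(i)} \to \CH^{(i)}$ is a $t$-exact auto-equivalence, so it restricts to an auto-equivalence of the abelian category $(\CH^{(i)})^{\heartsuit}$, and in particular induces an auto-bijection of $\textnormal{Irr}((\CH^{(i)})^{\heartsuit})$.

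\textbf{Step 4 (assembly).} Taking the disjoint union over $i$ of these auto-bijections, and using the decomposition $\textnormal{Irr}(\CH^{\heartsuit}) = \bigsqcup_i \textnormal{Irr}((\CH^{(i)})^{\heartsuit})$ provided by Step 1, yields the desired auto-bijection $\textnormal{DL}$. There is no genuinely hard step here: given the structural results of Propositions \ref{DG invertible} and \ref{prp DG strat-exact} together with the cuspidal-rank decomposition of Theorem \ref{thm cusp decomp}, the corollary is essentially a formal assembly, and the only point requiring any care is the verification in Step 2 that $\DGinv$ (and not only $\DG$) preserves each stratum $\CH^{(i)}$.
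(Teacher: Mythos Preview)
Your proof is correct and matches the paper's implicit reasoning: the paper states this corollary without proof, as an immediate consequence of Proposition~\ref{DG invertible} and Proposition~\ref{prp DG strat-exact} together with the decomposition of Theorem~\ref{thm cusp decomp}, and you have simply spelled out the straightforward assembly. Your care in Step~2 about $\DGinv$ preserving each $\CH^{(i)}$ is slightly more than needed---once $\DG$ is an auto-equivalence of $\CH$ (which follows since both $\DG$ and $\DGinv$ preserve $\CH$ by Corollary~\ref{cor DG preserves CH} and Proposition~\ref{clm formula for inv}) and preserves each direct summand $\CH^{(i)}$, it is automatically an auto-equivalence of each summand---but your argument via Proposition~\ref{clm formula for inv} is also perfectly valid.
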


Let us show that $\textnormal{DL}$ is indeed involutive:

\begin{claim}\label{DL involutive}
	One has $\textnormal{DL} \circ \textnormal{DL} = \textnormal{Id}$.
\end{claim}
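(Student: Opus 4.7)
First, $\textnormal{DL}$ is well-defined: combining proposition \ref{DG invertible} with proposition \ref{prp DG strat-exact}, the functor $\Phi_i := \DG[d_i]$ restricted to $\CH^{(i)}$ is a $t$-exact auto-equivalence, and so induces a bijection on the set of iso classes of irreducibles in $(\CH^{(i)})^{\heartsuit}$ that coincides with $\textnormal{DL}$. It thus suffices to show $\Phi_i^2(\calE) \cong \calE$ for every irreducible $\calE \in (\CH^{(i)})^{\heartsuit}$.

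The key computation is on parabolically induced objects. For a cuspidal irreducible $\calG \in \CHI^{\heartsuit}$ with $|I| = i$, proposition \ref{clm DG commutes pind} applied to the pair $(P_I, P_I^-)$ and to its reverse $(P_I^-, P_I)$, combined with corollary \ref{clm DG id on cusp} applied to $G_I$ (which gives $\DGI(\calG) \cong \calG[-d_I]$ because $\calG$ is cuspidal), yields
\[ \Phi_i(pind_I \calG) \cong pind_I^- \calG, \qquad \Phi_i(pind_I^- \calG) \cong pind_I \calG, \]
whence $\Phi_i^2(pind_I \calG) \cong pind_I \calG$. By lemma \ref{lem irr uch cusp rank}, every irreducible $\calE$ of cuspidal rank $i$ is a direct summand of some such $pind_I \calG$; by lemma \ref{lem pres pind preserve ss} together with lemma \ref{lem pind pind- same}, $pind_I \calG$ and $pind_I^- \calG$ are semisimple with the same class in $K_0$, and hence are in fact isomorphic.

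To upgrade $\Phi_i^2(pind_I \calG) \cong pind_I \calG$ to $\Phi_i^2(\calE) \cong \calE$ for each irreducible summand $\calE$, I would pass to the Grothendieck group $K_0((\CH^{(i)})^{fl})$. Since $\Phi_i^2$ is an auto-equivalence of $(\CH^{(i)})^{\heartsuit}$, it acts on $K_0$ by a permutation matrix in the basis of iso classes of irreducibles, so $\Phi_i^2 = \textnormal{Id}$ on $K_0$ is equivalent to $\Phi_i^2$ fixing every iso class of irreducibles. From theorem \ref{thm main} together with lemma \ref{lem pind pind- same}, the class $[\Phi_i \calE]$ in $K_0$ equals
\[ \sum_{|J| \geq i} (-1)^{i + |J|} [pind_J \, pres_J \, \calE], \]
an Alvis--Curtis-style alternating sum; the sign comes from combining the shift $[-d_J]$ in the $M_j$-layer of theorem \ref{thm main} with the outer shift $[d_i]$, and the restriction $|J| \geq i$ follows from the vanishing $pres_J \calE = 0$ for $|J| < i$, a consequence of the Mackey filtration from remark \ref{rem properties of pind} and cuspidality of $\calG$. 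Squaring this operator, and applying the Mackey filtration for $pres_J \circ pind_I$ once more, the resulting double sum telescopes, by exactly the combinatorial identity used in the classical proof of Alvis--Curtis involutivity for finite groups of Lie type, to the identity on $K_0$.

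The main obstacle is this Mackey cancellation; although the ingredients (the Mackey filtration, $t$-exactness and semisimplicity-preservation of parabolic functors, second adjointness, and the explicit layers of theorem \ref{thm main}) are all in place, one needs to carefully track how the cohomological shifts produce the alternating signs that match the classical Alvis--Curtis combinatorics.
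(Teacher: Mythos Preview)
Your proposal is correct in spirit, but you are working much harder than necessary. The paper's proof is essentially a one-liner that you bypass: by theorem~\ref{thm main} the functor $\DG$ is glued from the layers $\bigoplus_{|I|=j} pind_I^- \circ pres_I [-d_I]$, while by proposition~\ref{clm formula for inv} (which you never invoke) its inverse $\DGinv$ is glued from $\bigoplus_{|I|=j} pind_I \circ pres_I [d_I]$. Lemma~\ref{lem pind pind- same} says $pind_I$ and $pind_I^-$ agree on $K_0(\CHcdot^{fl})$, so the two filtrations have identical associated graded on $K_0$, and hence $\DG$ and $\DGinv$ induce the \emph{same} endomorphism of $K_0(\CH^{fl})$. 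Since they are categorical inverses, that common endomorphism is its own inverse, and $\textnormal{DL}^2 = \Id$ follows at once.

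Your route instead writes $[\Phi_i(\calE)]$ as the Alvis--Curtis alternating sum and then squares it via the Mackey filtration, reducing to the classical combinatorial cancellation. This is valid, but it re-proves by hand exactly the cancellation that the existence and explicit filtration of $\DGinv$ give for free. The first half of your argument (the direct computation $\Phi_i^2(pind_I\calG) \cong pind_I\calG$ via proposition~\ref{clm DG commutes pind} and corollary~\ref{clm DG id on cusp}) is correct and pleasant, and you rightly observe that it does not by itself pin down the action on individual irreducible summands; but once you are forced to pass to $K_0$ anyway, the shortcut through proposition~\ref{clm formula for inv} is the move you are missing.
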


\begin{proof}
	In view of theorem \ref{thm main} and proposition \ref{clm formula for inv}, coupled with lemma \ref{lem pind pind- same}, it is clear that $\DG$ and $\DGinv$ induce the same map $$ K_0 (\CH^{fl}) \to K_0 (\CH^{fl}).$$
\end{proof}

\begin{remark}
	From theorem \ref{thm main}, it is clear that the involution $\textnormal{DL}$
	coincides with that defined in \cite[\S 15]{Lu1} (where it is denoted ``$d$").
\end{remark}

\section{The Deligne-Lusztig involution applied to unipotent principal series}\label{sec DL on prin ser}

In this section we record the effect of the Deligne-Lusztig involution when applied to unipotent principal series character $D$-modules, as an illustration. This is just a special case of \cite[Corollary 15.8.(c)]{Lu1}, but we include it here for completeness.

\medskip

In this section we will work with abelian categories; all $\textnormal{Hom}$'s will be understood to be $H^0 \textnormal{Hom}$'s, etc.

\subsection{Statement}

Let us denote $\textnormal{Spr} := pind_{\emptyset} (\calO)$ (this is the \emph{Springer $D$-module}). The irreducible character $D$-modules which are isomorphic to a direct summand of $\textnormal{Spr}$ will be called \emph{unipotent principal series}. The set of isomorphism classes of unipotent principal series character $D$-modules is in bijection with the set $Irr(W)$ of isomorphism classes of irreducible representations of $W$ (see section \ref{sec PS}); Denote by $\textnormal{Spr}_{\alpha}$ the isomorphism class of unipotent principal series character $D$-modules corresponding to $\alpha \in Irr(W)$.

\begin{proposition}\label{prp DL on Spr alpha}
	For every $\alpha \in \textnormal{Irr} (W)$, one has $$\textnormal{DL} (\textnormal{Spr}_{\alpha}) = \textnormal{Spr}_{\textnormal{sgn} \otimes \alpha}$$ (where $\textnormal{sgn}$ is the one-dimensional sign representation of $W$).
\end{proposition}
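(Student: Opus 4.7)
The plan is to reduce the statement to a $K$-theoretic identity in $K_0(\CH^{fl})$ and then invoke the Solomon-style formula cited in the introduction.

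First, I would extract an explicit $K$-theoretic formula for $\textnormal{DL}$ on unipotent principal series. Theorem \ref{thm main} presents $\DG$ as glued from $M_i = \bigoplus_{|I|=i} pind^-_I \circ pres_I[-d_I]$; since a gluing by fiber sequences decategorifies to addition in $K_0$ and a shift $[-d_I]$ contributes a sign $(-1)^{d_I}$, one obtains $[\DG(\calF)] = \sum_I (-1)^{d_I}[pind^-_I pres_I(\calF)]$. The unipotent principal series have cuspidal rank $0$ (since $\calO \in D(T \backslash T)$ is trivially cuspidal for the torus), so $\textnormal{DL} = \DG[d_0]$ with $d_0 = \dim T$; using Lemma \ref{lem pind pind- same} to replace $pind^-_I$ by $pind_I$ and noting $d_I = \dim T - |I|$ yields
$$[\textnormal{DL}(\textnormal{Spr}_\alpha)] = \sum_{I \subset \Sigma} (-1)^{|I|}\bigl[pind_I \circ pres_I(\textnormal{Spr}_\alpha)\bigr] \in K_0(\CH^{fl}).$$

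Next, I would match the operators $pind_I$ and $pres_I$ (restricted to the principal series) with $ind^W_{W_I}$ and $res^W_{W_I}$ under the Springer bijection $[\textnormal{Spr}_\alpha] \leftrightarrow [\alpha]$. By transitivity of parabolic induction, $pind_I(\textnormal{Spr}^I) \cong \textnormal{Spr}$, and this isomorphism intertwines the natural $W_I$-action on $\textnormal{Spr}^I$ (coming from $\textnormal{End}(\textnormal{Spr}^I) \cong k[W_I]$) with the restriction via $W_I \subset W$ of the $W$-action on $\textnormal{Spr}$. Extracting $W_I$-isotypic components from $\textnormal{Spr} \cong \bigoplus_\alpha \textnormal{Spr}_\alpha \otimes \alpha$ gives $pind_I(\textnormal{Spr}^I_\beta) \cong \bigoplus_\alpha \langle \alpha, ind^W_{W_I}\beta\rangle \textnormal{Spr}_\alpha$, i.e.\ $pind_I$ corresponds to $ind^W_{W_I}$. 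Dually, combining second adjointness (Theorem \ref{thm second adj}) with the analogous $pind^-_I(\textnormal{Spr}^I) \cong \textnormal{Spr}$, and using the $t$-exactness and semisimplicity from Theorem \ref{thm exactness} and Lemma \ref{lem pres pind preserve ss}, one identifies $pres_I(\textnormal{Spr}_\alpha) \cong \bigoplus_\beta \langle \beta, res^W_{W_I}\alpha\rangle \textnormal{Spr}^I_\beta$, so $pres_I$ corresponds to $res^W_{W_I}$.

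Combining these two steps with the Solomon-type identity $\sum_I (-1)^{|I|} ind^W_{W_I} res^W_{W_I}(\alpha) = \textnormal{sgn} \otimes \alpha$ from the introduction yields $[\textnormal{DL}(\textnormal{Spr}_\alpha)] = [\textnormal{Spr}_{\textnormal{sgn} \otimes \alpha}]$ in $K_0(\CH^{fl})$. Since $\textnormal{DL}(\textnormal{Spr}_\alpha)$ is by construction an irreducible character $D$-module and $\textnormal{Spr}_{\textnormal{sgn}\otimes\alpha}$ is another, and since irreducibles form a $\bbZ$-basis of $K_0(\CH^{fl})$, this equality of classes forces $\textnormal{DL}(\textnormal{Spr}_\alpha) \cong \textnormal{Spr}_{\textnormal{sgn} \otimes \alpha}$.

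The hard part is the second step: transferring the $W$-symmetry of the Springer sheaf cleanly through $pind_I$ and $pres_I$ so that the bookkeeping of Weyl-group multiplicities matches up. The Springer bijection $\alpha \leftrightarrow \textnormal{Spr}_\alpha$, the compatibility $\textnormal{End}(\textnormal{Spr}^I) \cong k[W_I]$, and the $W_I$-equivariance of $pind_I \textnormal{Spr}^I \cong \textnormal{Spr}$ are presumably set up in section \ref{sec PS}; with those in hand, the remainder is the $K$-theoretic manipulation above together with the purely combinatorial Solomon identity for which the introduction promises references.
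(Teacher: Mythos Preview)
Your proposal is correct and follows essentially the same route as the paper: decategorify Theorem~\ref{thm main} to an alternating sum in $K_0$, match $pind_I$/$pres_I$ with $ind^W_{W_I}$/$res^W_{W_I}$ via the $W_I$-equivariant isomorphism $pind_I(\textnormal{Spr}^I)\cong\textnormal{Spr}$, and invoke the Solomon identity. The only cosmetic difference is that the paper packages the second step as commutativity of $F=\Hom(\textnormal{Spr},-)$ with $pind_I$/$pres_I$ (Claim~\ref{clm F comm with pind pres}) rather than writing out isotypic multiplicities, but the content is identical.
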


\begin{proof}
	The proof (or, rather, a reduction to a calculation for Weyl groups, for which references are given) is postponed to the end of section \ref{sec DL Weyl}.
\end{proof}

\subsection{The abelian category $\textnormal{PS} (G)$ and its relation to $\textnormal{Rep}_k^{fd} (W)$}\label{sec PS}

Let us denote by $\textnormal{PS}(G) \subset \CH^{\heartsuit}$ the abelian subcategory consisting of objects which are isomorphic to finite direct sums of direct summands of the Springer $D$-module $\textnormal{Spr} := pind_{\emptyset} (\calO)$. Notice that $\textnormal{PS} (G) \subset (\CH^{(0)})^{\heartsuit}$.

\begin{lemma}
	The functors $pind_I$, $pres_I$ and $\textnormal{DG}_{D(G \backslash G)}[d_{\emptyset}]$ preserve $\textnormal{PS} (\cdot)$.	
\end{lemma}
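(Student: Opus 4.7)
The plan is to treat the three functors in turn and in each case reduce, by additivity, to checking what the functor does to $\textnormal{Spr}$ itself: since every object of $\textnormal{PS}(\cdot)$ is a direct summand of $\textnormal{Spr}^n$ for some $n$, and every functor in sight is additive, it suffices to identify the image of $\textnormal{Spr}$ up to isomorphism with a finite direct sum of copies of the appropriate Springer $D$-module.

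For $pind_I$, the transitivity $pind_I \circ pind_{\emptyset}^I \cong pind_{\emptyset}$ from Remark \ref{rem properties of pind} immediately gives $pind_I(\textnormal{Spr}_{G_I}) \cong \textnormal{Spr}_G$, after which additivity handles a general $\calF \in \textnormal{PS}(G_I)$. For $pres_I$, I would invoke the special case $I = \emptyset$ of Remark \ref{rem properties of pind}, according to which $pres_I \circ pind_{\emptyset}$ is glued from functors $pind^I_{\emptyset} \circ \dot{w}^{!}$ for $\dot{w} \in N_G(T)$; applied to $\calO \in D(T \backslash T)$, each $\dot{w}^{!}\calO$ is again $\calO$ (since $\dot{w}$ is an automorphism of the smooth stack $T \backslash T$), so $pres_I(\textnormal{Spr})$ is glued from copies of $\textnormal{Spr}_{G_I}$. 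By Theorem \ref{thm exactness} this gluing lies in $(\CHI)^{\heartsuit}$ and by Lemma \ref{lem pres pind preserve ss} it is semisimple, so the gluing splits into a finite direct sum of copies of $\textnormal{Spr}_{G_I}$; additivity once more handles arbitrary $\calF \in \textnormal{PS}(G)$.

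The substantive case is $\DG[d_{\emptyset}]$. The key observation is that the Springer $D$-module is insensitive to the choice of Borel: since $B$ and $B^-$ are $G$-conjugate, $pind_{\emptyset}(\calO) \cong pind^-_{\emptyset}(\calO)$, so $\textnormal{Spr} \cong pind^-_{\emptyset}(\calO)$. Proposition \ref{clm DG commutes pind} with $I = \emptyset$ then gives
\[
\DG(\textnormal{Spr}) \;\cong\; \DG\bigl(pind^-_{\emptyset}(\calO)\bigr) \;\cong\; pind_{\emptyset}\bigl(\DGj_{D(T \backslash T)}(\calO)\bigr).
\]
Since the torus $T$ has no proper parabolic subgroups, all of $D(T \backslash T)$ is cuspidal, and Corollary \ref{clm DG id on cusp} applied with $G$ replaced by $T$ (for which the corresponding invariant equals $\dim T = d_{\emptyset}$) yields $\DGj_{D(T \backslash T)}(\calO) \cong \calO[-d_{\emptyset}]$. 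Substituting gives $\DG(\textnormal{Spr}) \cong \textnormal{Spr}[-d_{\emptyset}]$, i.e.\ $\DG[d_{\emptyset}](\textnormal{Spr}) \cong \textnormal{Spr}$; additivity then shows that $\DG[d_{\emptyset}]$ sends any direct summand of $\textnormal{Spr}^n$ to a direct summand of $\textnormal{Spr}^n$, so it preserves $\textnormal{PS}(G)$.

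The only non-routine input is the identification $pind_{\emptyset}(\calO) \cong pind^-_{\emptyset}(\calO)$, i.e.\ the Borel-independence of the Springer $D$-module; this is standard, but is the sole external ingredient needed beyond the results of the preceding sections of the paper.
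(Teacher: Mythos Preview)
Your argument is correct and follows essentially the same route as the paper's proof: for $pind_I$ and $pres_I$ the paper simply cites Remark~\ref{rem properties of pind} and Lemma~\ref{lem pres pind preserve ss}, and for $\DG[d_\emptyset]$ it asserts $\DG[d_\emptyset](\textnormal{Spr})\cong\textnormal{Spr}$ ``in view of Proposition~\ref{clm DG commutes pind}'', leaving the identification $pind_\emptyset\cong pind_\emptyset^-$ and the computation of $\DGj_{D(T\backslash T)}$ implicit. Your write-up makes these implicit steps explicit (in particular the appeal to Corollary~\ref{clm DG id on cusp} for the torus), but the strategy is the same.
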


\begin{proof}
	For $pind_I$ and $pres_I$ the claim is clear by remark \ref{rem properties of pind} and lemma \ref{lem pres pind preserve ss}. For $\textnormal{DG}_{D(G \backslash G)}[d_{\emptyset}]$ the claim is clear because $\textnormal{DG}_{D(G \backslash G)} [d_{\emptyset}] (\textnormal{Spr}) = \textnormal{Spr}$ in view of proposition \ref{clm DG commutes pind}.
\end{proof}

Recall the following (Springer theory): The restriction of the $D$-module $\textnormal{Spr}$ to $$N_G (T) \backslash T^{reg} \cong G \backslash G^{rs} \subset G \backslash G,$$ under the further identification $$ D(W \backslash T^{reg})^{\heartsuit} \cong D(N_G (T) \backslash T^{reg})^{\heartsuit},$$ becomes identified with the pushforward of $\calO$ via $T^{reg} \to W \backslash T^{reg}$. By permuting the fibers of this Galois cover, one obtains an action of $W$ on $\textnormal{Spr}$, which induces an isomorphism $\textnormal{End} (\textnormal{Spr}) \cong k[W]$. One therefore obtains an equivalence of abelian categories $$ F : \textnormal{PS} (G) \xrightarrow{\approx} \textnormal{Rep}_k^{fd} (W),$$ given by $\textnormal{Hom} (\textnormal{Spr} , -)$. In particular, the isomorphism classes of irreducible summands of $\textnormal{Spr}$ are in bijection with the isomorphism classes of irreducible representations of $W$, and for $\alpha$ being one of the latter we denote by $\textnormal{Spr}_{\alpha}$ the corresponding one of the former.

\begin{claim}\label{clm F comm with pind pres}
	The following diagrams are commutative:
	
	$$ \xymatrix@R+1pc@C+2pc{ \textnormal{PS} (G) \ar[r]^{F} \ar[d]_{pres_I} & \textnormal{Rep}^{fd}_k (W) \ar[d]^{res^W_{W_I}} \\ \textnormal{PS} (G_I) \ar[r]_{F_I} & \textnormal{Rep}^{fd}_k (W_I) },$$
	
	$$ \xymatrix@R+1pc@C+2pc{ \textnormal{PS} (G) \ar[r]^{F} & \textnormal{Rep}^{fd}_k (W) \\ \textnormal{PS} (G_I) \ar[r]_{F_I} \ar[u]^{pind_I} & \textnormal{Rep}^{fd}_k (W_I) \ar[u]_{ind^W_{W_I}}}.$$
	
\end{claim}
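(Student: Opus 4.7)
The plan is to prove the restriction diagram (top square) directly, then deduce the induction diagram (bottom square) by passing to left adjoints.

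For the restriction diagram, given $\calF \in \textnormal{PS}(G)$, the adjunction $pind_I \dashv pres_I$ together with the identification
\[
pind_I (\textnormal{Spr}_I) \cong pind_I \circ pind^I_{\emptyset} (\calO) \cong pind_{\emptyset} (\calO) = \textnormal{Spr},
\]
coming from the composition formula for parabolic induction (remark \ref{rem properties of pind}) applied to $\calO$, yields a natural isomorphism of underlying vector spaces
\[
F_I (pres_I \calF) = \textnormal{Hom}(\textnormal{Spr}_I, pres_I \calF) \cong \textnormal{Hom}(pind_I \textnormal{Spr}_I, \calF) \cong \textnormal{Hom}(\textnormal{Spr}, \calF) = F(\calF).
\]
The substantive content of the claim is that this is $W_I$-equivariant, with $W_I$ acting on the left via its Springer action on $\textnormal{Spr}_I$ and on the right via the restriction of the Springer $W$-action on $\textnormal{Spr}$ along the inclusion $W_I \hookrightarrow W$.

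Equivalently, I need to show that the composite algebra map
\[
k[W_I] \cong \textnormal{End}(\textnormal{Spr}_I) \xrightarrow{pind_I} \textnormal{End}(pind_I \textnormal{Spr}_I) = \textnormal{End}(\textnormal{Spr}) \cong k[W]
\]
coincides with the standard inclusion induced by $W_I \subset W$. Since the Springer actions are defined by their restrictions to the open regular loci, and since $\textnormal{End}(\textnormal{Spr})$ is determined by $\textnormal{End}(\textnormal{Spr}|_{G \backslash G^{rs}})$ (as $\textnormal{Spr}$ is the intermediate extension from there, and analogously for $\textnormal{Spr}_I$), the comparison reduces to the regular locus. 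There, both $\textnormal{Spr}|_{rs}$ and the restriction of $pind_I \textnormal{Spr}_I$ are identified with the pushforward of $\calO$ along $T^{reg} \to W \backslash T^{reg}$, and the $W$- and $W_I$-actions are realized as deck transformations. Compatibility with $W_I \subset W$ is then manifest from the factorization of Galois covers $T^{reg} \to W_I \backslash T^{reg} \to W \backslash T^{reg}$.

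For the induction diagram, note that $F$ and $F_I$ are equivalences of abelian categories, $pind_I$ is the left adjoint of $pres_I$ by definition, and $ind^W_{W_I}$ is the left adjoint of $res^W_{W_I}$ by Frobenius reciprocity for the finite subgroup $W_I \subset W$. The induction diagram is therefore the mate of the restriction diagram under these adjunctions, and so commutes as soon as the restriction diagram does.

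The main obstacle is the $W_I$-equivariance of the isomorphism $pind_I (\textnormal{Spr}_I) \cong \textnormal{Spr}$: a geometric compatibility to be checked on the open regular locus using the factorization of Galois covers above. Everything else in the argument is formal (adjunctions, composition of parabolic inductions, mates of commutative squares) or standard Springer theory.
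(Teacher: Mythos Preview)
Your proposal is correct and follows essentially the same approach as the paper: both establish the first square via the adjunction $pind_I \dashv pres_I$ combined with a $W_I$-equivariant identification $pind_I(\textnormal{Spr}_I) \cong \textnormal{Spr}$, and then deduce the second square by passing to left adjoints. In fact you supply more detail than the paper does on the one nontrivial point---the compatibility of the two $W_I$-actions---by reducing to the regular semisimple locus and invoking the factorization of Galois covers, whereas the paper simply asserts that the identification can be chosen compatibly.
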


\begin{proof}
	
	One has $\textnormal{Spr} \cong pind_I (\textnormal{Spr}_I)$. This identification can be chosen so that the two actions of $W_I$ on $\textnormal{Spr}$, one obtained by restriction from the action of $W$, and the other obtained by functoriality of $pind_I$ applied to the $W_I$ action on $\textnormal{Spr}_I$, coincide.
	
	Then $$ res^W_{W_I} (F (-)) = res^W_{W_I} \textnormal{Hom}(\textnormal{Spr} , -) \cong \textnormal{Hom}(pind_I (\textnormal{Spr}_I) , -) \cong \textnormal{Hom}(\textnormal{Spr}_I , pres_I (-)) = F_I (pres_I (-)).$$
	
	\medskip
	
	The commutativity of the second diagram follows by taking left adjoints in the first diagram.

\end{proof}

\subsection{The Deligne-Lusztig operator for the Weyl group}\label{sec DL Weyl}

\begin{definition}
	Define the \emph{Deligne-Lusztig operator} $$\textnormal{DL}_W : K_0 (\textnormal{Rep}^{fd}_k(W)) \to K_0 (\textnormal{Rep}^{fd}_k(W))$$ as follows: $$ \textnormal{DL}_W (V) := \sum_{I \subset \Sigma} (-1)^{|I|} \cdot ind^W_{W_I} res^W_{W_I} V.$$
\end{definition}

\begin{claim}\label{clm DL W}
	One has $$ \textnormal{DL}_W = \textnormal{sgn} \otimes -,$$ where $\textnormal{sgn}$ is the one-dimensional sign representation.
\end{claim}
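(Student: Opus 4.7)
My plan is to reduce the identity to the special case $V = \mathbf{1}$ using the projection formula, and then to invoke the classical identity on the $W$-equivariant Euler characteristic of the Coxeter complex.

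\emph{Step 1: reduction to the trivial representation.} For each $I \subseteq \Sigma$, the projection formula (tensor identity) gives a natural isomorphism
\[ ind^W_{W_I}\bigl( res^W_{W_I}(V) \bigr) \cong ind^W_{W_I}(\mathbf{1}) \otimes V \]
in $\textnormal{Rep}^{fd}_k (W)$. Summing over $I$ with the signs $(-1)^{|I|}$ yields
\[ \textnormal{DL}_W (V) = \left( \sum_{I \subseteq \Sigma} (-1)^{|I|} \cdot ind^W_{W_I}(\mathbf{1}) \right) \otimes V \]
in $K_0 (\textnormal{Rep}^{fd}_k (W))$. It therefore suffices to establish the scalar identity
\[ \sum_{I \subseteq \Sigma} (-1)^{|I|} \cdot k[W / W_I] = \textnormal{sgn} \]
in the same $K_0$-group.

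\emph{Step 2: the Weyl-group identity.} This is a classical fact, which I would simply cite (as the introduction already indicates references are given). The most conceptual proof interprets the left-hand side as the $W$-equivariant Euler characteristic of the (augmented) simplicial chain complex of the Coxeter complex of $W$, which is $W$-equivariantly a triangulation of the sphere $S^{|\Sigma|-1}$. Since the only nonvanishing reduced homology of this sphere sits in top degree and is known to carry the sign character, the alternating sum collapses to $\pm \textnormal{sgn}$. A direct small check -- for instance $W = S_3$, where a straightforward computation gives $k[S_3] - 2 \cdot k[S_3 / C_2] + \mathbf{1} = \textnormal{sgn}$ -- fixes the overall sign to $+1$. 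Alternative routes are Solomon's original descent-algebra identity, or a direct character computation via Lefschetz fixed points on the Coxeter complex.

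The main (minor) obstacle is the sign bookkeeping in Step 2: in the Coxeter-complex picture, the $k$-simplices correspond to cosets of $W_I$ with $|I| = |\Sigma| - 1 - k$, so matching our indexing by $|I|$ to the simplicial dimension $k$ a priori produces a global sign of $(-1)^{|\Sigma|-1}$, and one must also account for the augmentation term coming from the empty simplex. Both issues are absorbed once one compares with the small-rank check above and with the identification of the top reduced homology as $\textnormal{sgn}$. With Step 1 and the cited identity of Step 2 in hand, $\textnormal{DL}_W = \textnormal{sgn} \otimes -$ follows immediately.
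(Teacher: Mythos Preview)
Your proposal is correct. The paper's own proof is simply a citation to \cite[Theorem 2]{So} and \cite[Theorem 1]{Ka}, so your sketch is a more fleshed-out version of the same route: the projection-formula reduction to $V=\mathbf{1}$ is standard, and your Step~2 (Coxeter-complex Euler characteristic / Solomon's identity) is exactly the content of the cited references.
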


\begin{proof}
	For the proof, see \cite[Theorem 2]{So} or \cite[Theorem 1]{Ka}.
\end{proof}

We will denote by $$\textnormal{DL} : K_0 (\textnormal{PS} (G)) \xrightarrow{\sim} K_0 (\textnormal{PS} (G))$$ the isomorphism given by $\textnormal{DL}$ on irreducibles (in other words, the isomorphism induced by $\textnormal{DG}_{D(G \backslash G)}[d_{\emptyset}]$).

\begin{lemma}\label{lem DL and DG match on Kgrp}
	One has, on the level of $K_0$-groups: $$ \textnormal{DL}_W \circ F = F \circ \textnormal{DL}.$$
\end{lemma}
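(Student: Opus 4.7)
The plan is to expand both sides on $K_0(\textnormal{PS}(G))$ using the filtration of Theorem \ref{thm main} and the intertwining of Claim \ref{clm F comm with pind pres}, and then match signs. By construction, $\textnormal{DL}$ on $K_0(\textnormal{PS}(G))$ is the map induced by the functor $\DG[d_{\emptyset}]$. Theorem \ref{thm main} expresses $\DG$ as glued from the list $M_0, \ldots, M_{|\Sigma|}$ with $M_i = \bigoplus_{|I|=i} pind_I^- \circ pres_I [-d_I]$. Since each fiber sequence in Definition \ref{def glue} contributes additively to the class in $K_0$, one obtains
\[
[\DG(\calF)] = \sum_{I \subset \Sigma} (-1)^{d_I}\,[pind_I^- \circ pres_I(\calF)]
\]
in $K_0(\CH^{fl})$ for any $\calF \in \textnormal{PS}(G)$.

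Next, I would replace $pind_I^-$ by $pind_I$ on the right-hand side using Lemma \ref{lem pind pind- same}, and absorb the overall shift $[d_{\emptyset}]$. Using $d_I = \dim T - |I|$ and $d_{\emptyset} = \dim T$, the combined sign $(-1)^{d_{\emptyset}+d_I}$ simplifies to $(-1)^{|I|}$, yielding
\[
[\textnormal{DL}(\calF)] = \sum_{I \subset \Sigma} (-1)^{|I|}\,[pind_I \circ pres_I(\calF)].
\]
Here I rely implicitly on the fact that all intermediate classes lie inside $K_0(\textnormal{PS}(G))$, which follows from the preservation properties already recorded (in particular the first lemma of section \ref{sec PS} plus Lemma \ref{lem pres pind preserve ss}).

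Finally, I would apply $F$ to this identity and invoke Claim \ref{clm F comm with pind pres} to intertwine the pair $(pind_I, pres_I)$ with $(ind^W_{W_I}, res^W_{W_I})$. The right-hand side then becomes
\[
\sum_{I \subset \Sigma} (-1)^{|I|}\, ind^W_{W_I} res^W_{W_I} F(\calF) = \textnormal{DL}_W(F(\calF)),
\]
giving the desired equality $F \circ \textnormal{DL} = \textnormal{DL}_W \circ F$. No step appears to present a serious obstacle; the argument is essentially sign bookkeeping built on top of Theorem \ref{thm main}, Lemma \ref{lem pind pind- same}, and Claim \ref{clm F comm with pind pres}. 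The only point requiring a small amount of care is verifying the sign computation $d_{\emptyset}+d_I \equiv |I| \pmod 2$, which is immediate from the definitions of $d_i$ and $d_I$ in the Notations section.
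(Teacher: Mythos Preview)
Your proposal is correct and follows essentially the same approach as the paper, which simply cites Theorem \ref{thm main} and Claim \ref{clm F comm with pind pres}. Your version is a fully unpacked form of that one-line proof, and in fact makes explicit a step the paper glosses over: the appeal to Lemma \ref{lem pind pind- same} to replace $pind_I^-$ by $pind_I$ before invoking Claim \ref{clm F comm with pind pres}.
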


\begin{proof}
	This is clear by theorem \ref{thm main} and claim \ref{clm F comm with pind pres}.
\end{proof}

\begin{proof}[Proof (of proposition \ref{prp DL on Spr alpha}).]
	One has $$ F (\textnormal{DL}  (\textnormal{Spr}_{\alpha})) = \textnormal{DL}_W (F(\textnormal{Spr}_{\alpha})) = \textnormal{DL}_W (\alpha) = \textnormal{sgn} \otimes \alpha = F (\textnormal{Spr}_{\textnormal{sgn} \otimes \alpha})$$ (where the first equality is by lemma \ref{lem DL and DG match on Kgrp} and the third equality is by claim \ref{clm DL W}) so $$ \textnormal{DL} (\textnormal{Spr}_{\alpha}) = \textnormal{Spr}_{\textnormal{sgn} \otimes \alpha}.$$
\end{proof}

\section{Proof of theorem \ref{thm main}}\label{sec prf of thm main}

In this section we provide a proof for theorem \ref{thm main}.

\subsection{The Vinberg monoid}

Our reference is \cite[Appendix D]{DrGa2}.

\medskip

Denote $T_{adj} := T / Z(G)$. We have an isomorphism $T_{adj} \xrightarrow{\sim} \bbG_m^{\Sigma}$ given by the simple roots. We denote by $\bar{\bbG}_m$ the affine line with its multiplicative monoid structure, and $\bar{T}_{adj} := \bar{\bbG}_m^{\Sigma}$.

\medskip

Let us denote by $V$ the Vinberg monoid of $G$. The group of invertible elements $V^{\times} \subset V$ is $G \overset{Z(G)}{\times} T$. One has a homomorphic map $deg : V \to \bar{T}_{adj}$ such that $deg^{-1} (T_{adj}) = V^{\times}$. Let us denote by $\overset{\circ}{V} \subset V$ the non-degenerate locus (it contains $V^{\times}$). The restriction of $deg$ to $\overset{\circ}{V} \subset V$ is smooth.

\medskip

We consider $V$ as a $(G \times G)$-space on the left, and a $T$-space on the right, by $(g_1 , g_2) * v * t = g_1 v g_2^{-1} t$. The right action of $T$ on $\overset{\circ}{V}$ is free, and the quotient by this action, as a $(G \times G)$-space, is the wonderful compactification of $G_{adj} := G / Z(G)$.

\medskip

Given $I \subset \Sigma$, let us denote by $(\bar{T}_{adj})_I \subset \bar{T}_{adj}$ the subspace of elements whose $(\Sigma - I)$-coordinates are $0$. Let us denote by $e_I \in (\bar{T}_{adj})_I$ the element whose $I$-coordinates are $1$. Denote by $T^I \subset T$ the subgroup consisting of elements whose $I$-coordinates are $1$.

\medskip

Recall our fixed choice of a Torel $T_{sub} \subset B \subset G$, giving rise to an identification $\phi : T \xrightarrow{\sim} T_{sub}$. Using this choice, we obtain a homomorphic section $s : \bar{T}_{adj} \to \overset{\circ}{V}$ of $deg$, which sends $t \in T_{adj}$ to $(\phi(t)^{-1} , t) \in V^{\times}$. Given $I \subset \Sigma$, the action of $G \times G$ on the fiber $\overset{\circ}{V}_{e_I}$ is transitive, and the stabilizer of $s(e_I)$ in $G \times G \times T^{op}$ consists of triples $(p_1 , p_2 , t)$ for which $p_1 \in P , p_2 \in P^- , t \in T^I$ and $[p_1] \cdot \phi (t) = [p_2]$. In particular, the stabilizer of $s(e_I)$ in $G \times G$ is $P \underset{M}{\times} P^-$.

\subsection{Filtration of the kernel}\label{sec filt of ker}

Let us denote by $S \subset G \times G \times \overset{\circ}{V}$ the subgroup scheme of the constant group scheme over $\overset{\circ}{V}$, consisting of $(g_1 , g_2 , m)$ for which $g_1 m g_2^{-1} = m$ (see \cite[subsection D.4.6]{DrGa2}, where $S$ is denoted $\textnormal{Stab}_{G \times G}$). Notice that $G \times G$ acts on $S$ on the left and $T$ acts on $S$ on the right (compatibly with these actions on $\overset{\circ}{V}$).

\medskip

We consider the following diagram, with Cartesian squares:

$$ \xymatrix{
	& & (G \times G) \ \backslash \ G \times G & & \\
	(G \times G) \ \backslash \ S_{e_{\Sigma}} \ar[r]^{\wt{\tau}} \ar[d]^{\pi^{\prime \prime}} & (G \times G) \ \backslash \ S_{\Sigma} \ / \ T \ar[d]^{\pi_{\Sigma}} \ar[r]^{\wt{j}} & (G \times G) \ \backslash \ S \ / \ T \ar[u]_{pr_{1,2}} \ar[d]^{\pi} & (G \times G) \ \backslash \ S_{I} \ / \ T \ar[l]_{\wt{i}} \ar[d]^{\pi_I} & (G \times G) \ \backslash \ S_{e_I} \ar[l]_{\wt{\sigma}} \ar[d]^{\pi^{\prime}} \\
	\{ e_{\Sigma} \} \ar[r]^{\tau} & (\bar{T}_{adj})_{\Sigma} \ / \ T \ar[r]^{j} & \bar{T}_{adj} \ / \ T & (\bar{T}_{adj})_{I} \ / \ T \ar[l]_{i} & \{ e_I \}  \ar[l]_{\sigma} }
$$

Here, $\pi$ is the projection induced by $S \xrightarrow{pr_3} \overset{\circ}{V} \xrightarrow{deg} \bar{T}_{adj}$. Notice that $\pi$ is smooth because $pr_3$ is (see \cite[Section D.4.6]{DrGa2}) and $deg$ is (see \cite[Section D.4.5]{DrGa2}).

\medskip

The map $pr_{1,2}$ is projective - this follows from $\overset{\circ}{V} / T$ being projective.

\medskip

Let us describe explicitly the $(G \times G)$-space $S_{e_I}$. It can be identified with the subspace of $$ G \times G \times \left( (G \times G) / (P_I \underset{G_I}{\times} P_I^-) \right)$$ consisting of $(g_1 , g_2 ; x_1 , x_2)$ for which $(x_1^{-1} g_1 x_1 , x_2^{-1} g_2 x_2) \in P_I \underset{G_I}{\times} P_I^-$. The identification is obtained by sending $(g_1 , g_2 ; x_1 , x_2)$ to $(g_1 , g_2 , x_1 s(e_I) x_2^{-1})$.

\medskip

In particular, one sees that $(G \times G) \backslash S_{e_{\Sigma}}$ can be identified with $G \backslash G$, in such a way that $pr_{1,2} \circ \wt{j} \circ \wt{\tau}$ becomes identified with the diagonal embedding for $G \backslash G$.

\medskip

The Cousin complex now allows us to glue the kernel representing $\DG$, which is $(pr_{1,2})_* \wt{j}_! \wt{\tau}_! C$, from kernels of the form $(pr_{1,2} \circ \wt{i})_* \wt{i}^! \wt{j}_! \wt{\tau}_! C$. We will thus prove theorem \ref{thm main} if we show:

\begin{claim}\label{clm calc of filt}
	The kernel $$(pr_{1,2} \circ \wt{i})_* \wt{i}^! \wt{j}_! \wt{\tau}_! C \in D(G \backslash G \times G \backslash G)$$ corresponds to the functor $pind^- \circ pres_I [-d_I]$.
\end{claim}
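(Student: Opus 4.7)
The plan is to reduce the sheaf computation on $(G \times G) \backslash S_I/T$ to an explicit one on the base $\bar{T}_{adj}/T$ via the smoothness of $\pi$, and then to identify the resulting pushforward to $G \backslash G \times G \backslash G$ with the kernel of the composition of the two correspondences defining $pres_I$ and $pind_I^-$, up to the shift $[-d_I]$.

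\medskip

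\noindent\textbf{Step 1 (Smooth base change).} Both Cartesian squares on the right of the large diagram have smooth vertical arrows of equal relative dimension (namely $\dim Z(G)$). Since $C \cong (\pi'')^* k$, iterated smooth base change yields
$$ \wt{\tau}_! C \cong \pi_\Sigma^*\, \tau_! k, \qquad \wt{j}_! \wt{\tau}_! C \cong \pi^*(j_! \tau_! k), \qquad \wt{i}^! \wt{j}_! \wt{\tau}_! C \cong \pi_I^*(i^! j_! \tau_! k).$$
The problem is thus reduced to computing $i^! j_! \tau_! k \in D((\bar{T}_{adj})_I/T)$ and then taking the pushforward through $\pi_I^*$ followed by $(pr_{1,2} \circ \wt{i})_*$.

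\medskip

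\noindent\textbf{Step 2 (Geometry of the $I$-stratum).} Using the transitivity of $(G \times G)$ on $\overset{\circ}{V}_{e_I}$ with stabilizer $H := P_I \underset{G_I}{\times} P_I^-$ at the base point $s(e_I)$, together with the fact that the fiber $S_{s(e_I)}$ of the stabilizer group scheme coincides with $H$ acting on itself by conjugation, I would identify
$$(G \times G) \backslash S_{e_I} \;\cong\; H \backslash H$$
in a way compatible with the composite $pr_{1,2} \circ \wt{i} \circ \wt{\sigma}: H \backslash H \to G \backslash G \times G \backslash G$ being the natural map induced by $H \hookrightarrow P_I \times P_I^- \subset G \times G$. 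Passing to quotients, this presents $H \backslash H$ as the fiber product $P_I \backslash P_I \underset{G_I \backslash G_I}{\times} P_I^- \backslash P_I^-$, which is precisely the stack supporting the composition of the two correspondences defining $pres_I$ and $pind_I^-$.

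\medskip

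\noindent\textbf{Step 3 (Base computation and matching).} Using the product decomposition $\bar{T}_{adj} = \prod_{\alpha \in \Sigma} \bar{\bbG}_m$ and a Kunneth argument, combined with the stacky structure $(T_{adj})_I/T \simeq BT^I$, one computes $i^! j_! \tau_! k$ to be a cohomological shift of the $!$-pushforward along $\sigma: \{e_I\} \to (\bar{T}_{adj})_I/T$ of the constant sheaf on $\{e_I\}$, with the shift being exactly $[-d_I]$. Pulling back via $\pi_I^*$ then identifies $\wt{i}^! \wt{j}_! \wt{\tau}_! C$ with the $!$-pushforward along $\wt{\sigma}$ of the dualizing sheaf of $(G \times G) \backslash S_{e_I}$ shifted by $[-d_I]$. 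Finally, pushing forward along $pr_{1,2} \circ \wt{i} \circ \wt{\sigma}$ and applying the identification of Step 2, we obtain the pushforward of $\omega_{H \backslash H}[-d_I]$ along the natural map to $G \backslash G \times G \backslash G$, which is precisely the kernel of $pind_I^- \circ pres_I [-d_I]$ by the standard composition-of-correspondences formula.

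\medskip

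\noindent\textbf{Main obstacle.} The delicate point is the cohomological shift bookkeeping in Step 3: each of the $|\Sigma - I|$ Kunneth factors (for $\alpha \notin I$) contributes a local-cohomology shift, while the stacky $T$-quotient (with $BT^I$ of virtual dimension $-d_I$) contributes a balancing shift; these must combine to exactly $[-d_I]$. A secondary subtlety is handling the further stratification of $(\bar{T}_{adj})_I/T$ by $J \subsetneq I$, in order to confirm that $i^! j_! \tau_! k$ is indeed $\sigma_!$-supported at the deepest substratum rather than smeared over a larger closed substack.
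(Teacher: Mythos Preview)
Your overall structure matches the paper: smooth base change to reduce to the base (your Step 1 is the paper's Lemma \ref{lem calc 2}), then identification of $(G\times G)\backslash S_{e_I}$ with $P_I\times_{G_I}P_I^-$ modulo conjugation and matching with the composed correspondence (your Step 2 is the paper's \S7.4). The genuine divergence is in your Step 3.

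For the base computation $i^! j_! \tau_! C$, the paper does \emph{not} do a K\"unneth/local-cohomology calculation. Instead it invokes the contraction principle (Braden's theorem, \cite[Proposition 3.2.2]{DrGa3}): the map $r:\bar T_{adj}/T\to(\bar T_{adj})_I/T$ setting the $(\Sigma\smallsetminus I)$-coordinates to $0$ is a contraction onto the closed stratum, so $i^! \cong r_!$ on the relevant object, giving immediately
\[
i^! j_! \tau_! C \;\cong\; r_! j_! \tau_! C \;=\; (r\circ j\circ \tau)_! C \;=\; \sigma_! C.
\]
The shift then enters in one clean step via the identity $\sigma_! = \sigma_*[-d_I]$ (a consequence of the Pseudo-Identity calculation for $BT^I$). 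This completely dissolves both of your stated obstacles: there is no factor-by-factor shift bookkeeping, and there is no issue with support on deeper substrata of $(\bar T_{adj})_I/T$, because the contraction principle computes $i^!$ globally as $r_!$ rather than stratum-by-stratum.

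Your proposed direct computation is not wrong in principle, but note that as written your shift is off: you claim $i^! j_! \tau_! k \cong \sigma_! k[-d_I]$, whereas the correct answer is $\sigma_! k$ (with the $[-d_I]$ appearing only upon converting $\sigma_!$ to $\sigma_*$). If you carried your version through, you would end up with $pind_I^-\circ pres_I[-2d_I]$. This is exactly the sort of slip the contraction-principle argument is designed to prevent. Also, in your final sentence you push forward $\wt\sigma_!\omega$ by $(pr_{1,2}\circ\wt i)_*$, mixing $*$- and $!$-pushforwards; the paper's route stays with $\wt\sigma_*\omega$, so that the composite is simply $(pr_{1,2}\circ\wt i\circ\wt\sigma)_*\omega$.
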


\subsection{Calculation of the filtrants}

\begin{lemma}\label{lem calc 1}
	One has $ i^! j_! \tau_! C \cong \sigma_* \omega [-d_I]$.
\end{lemma}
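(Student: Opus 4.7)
The plan is to exploit the product structure $\bar{T}_{adj} = \bar{\bbG}_m^{\Sigma}$ in order to reduce the computation to a product of one-variable identifications via K\"unneth. The $T$-action on $\bar{T}_{adj}$ factors through the componentwise action of $T_{adj} = \bbG_m^{\Sigma}$, and since $Z(G) = \ker(T \to T_{adj})$ acts trivially, the short exact sequence $1 \to Z(G) \to T \to T_{adj} \to 1$ (which splits in characteristic zero as an SES of tori) yields isomorphisms of stacks
$$\bar{T}_{adj}/T \cong (\bar{\bbG}_m/\bbG_m)^{\Sigma} \times BZ(G), \quad (\bar{T}_{adj})_I/T \cong (\bar{\bbG}_m/\bbG_m)^{I} \times (B\bbG_m)^{\Sigma - I} \times BZ(G).$$
Under this decomposition, the maps $\tau, j, i, \sigma$ of the lemma factor compatibly as products of one-variable maps together with the universal torsor $\tau_0 \colon \bullet \to BZ(G)$.

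By K\"unneth applied to $(-)^!, (-)_!, (-)_*$, the desired identification splits into a product, one factor per $s \in \Sigma$ together with a $BZ(G)$-factor. The key local computation is for coordinates $s \in \Sigma - I$: on $\bar{\bbG}_m/\bbG_m$ I need $i_0^! (j_0)_! C \cong \sigma_{0,*} \omega [-1]$, where $j_0 \colon \bullet \to \bar{\bbG}_m/\bbG_m$ is the open inclusion of $[1]$, $i_0 \colon B\bbG_m \to \bar{\bbG}_m/\bbG_m$ is the closed inclusion of $[0]$, and $\sigma_0 \colon \bullet \to B\bbG_m$ is the atlas. I would verify this by pulling back to the atlas $\bbA^1 \to \bbA^1/\bbG_m$ and computing $\wt{i}^! \wt{j}_! \calO_{\bbG_m}$ on $\{0\} \subset \bbA^1$ using the standard fibre sequence $\wt{i}_* \wt{i}^! \to \textnormal{Id} \to \wt{j}_* \wt{j}^*$ together with the explicit meromorphic extension $\wt{j}_* \calO_{\bbG_m} = k[x, x^{-1}]$; the resulting two-term object reflects $H^*_{dR}(\bbG_m) = k \oplus k[-1]$. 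The remaining factors (coordinates $s \in I$ and the $BZ(G)$-factor) contribute the extra shift $[-d_{\Sigma}]$, coming from the comparison of $(\tau_0)_!$ with $(\tau_0)_*$ along the rank-$d_{\Sigma}$ torus torsor.

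Combining via K\"unneth, the shifts add as $|\Sigma - I| + d_{\Sigma} = d_I$, giving the claim. The main obstacle I foresee is tracking the $!$ versus $*$ discrepancies and the shift conventions (in particular the distinction $C = \calO[n]$ vs.\ $\omega = \calO[-n]$) in each factor; it may be cleaner to first apply Verdier duality globally, reducing the lemma to the equivalent statement
$$ i^* j_* \tau_* \omega \cong \sigma_! C \, [d_I], $$
which has uniformly $*$-pushforwards together with a single $\sigma_!$ and should admit a more direct verification.
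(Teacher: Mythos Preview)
Your K\"unneth approach is genuinely different from the paper's, which is considerably shorter. The paper argues globally via the contraction principle: introducing the retraction $r \colon \bar{T}_{adj}/T \to (\bar{T}_{adj})_I/T$ that sends each $(\Sigma - I)$-coordinate to $0$, one has $i^! \cong r_!$ by \cite[Proposition~3.2.2]{DrGa3}, so that
\[
i^! j_! \tau_! C \;\cong\; r_! j_! \tau_! C \;=\; (r\circ j\circ\tau)_! C \;=\; \sigma_! C \;=\; \sigma_!\omega,
\]
and then a single identity $\sigma_! \cong \sigma_*[-d_I]$ (obtained from claim~\ref{clm DG commutes} together with the computation of $\DGj$ on classifying stacks in \cite[\S 6.7.3]{Ga1}) finishes. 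No product decomposition, no atlas, no per-coordinate bookkeeping.

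Your factor-by-factor route essentially replays this argument one coordinate at a time. For $s \in \Sigma - I$ your atlas computation, once the base-change and shift bookkeeping is done, recovers the one-variable contraction identity $i_0^!(j_0)_! k \cong (r_0)_!(j_0)_! k = (\sigma_0)_! k$; invoking the contraction principle directly is cleaner than chasing the fibre sequence on $\bbA^1$. There is, however, a point you pass over: for $s \in I$ your left-hand factor is $(j_0)_! k$ with $j_0 \colon \bullet \hookrightarrow \bar{\bbG}_m/\bbG_m$ the open point, while your right-hand factor is $(j_0)_* k$, and your shift count $|\Sigma - I| + d_\Sigma = d_I$ tacitly assumes these coincide. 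They do---this is precisely the per-factor content of the paper's $\sigma_! \cong \sigma_*[-d_I]$, equivalently the statement that $\DGj_{D(\bbA^1/\bbG_m)}$ is the identity---but it is not a formality of the product structure and is not covered by your ``rank-$d_\Sigma$ torus torsor'' remark, which only accounts for the $BZ(G)$ factor. You would need to supply this (e.g.\ again via claim~\ref{clm DG commutes} applied to $j_0$). In short: your plan is sound, but the paper's global use of the contraction principle packages all coordinates at once and replaces several local checks, including the one you omit, with a single invocation.
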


\begin{proof}
	By the contraction principle (see \cite[Proposition 3.2.2]{DrGa3}), denoting by $r : \bar{T}_{adj} / T \to (\bar{T}_{adj})_I / T $ the map equating all $(\Sigma - I)$-coordinates to $0$, one has $$ i^! j_! \tau_! C \cong r_! j_! \tau_! C = (r \circ j \circ \tau)_! C = \sigma_! C = \sigma_! \omega.$$ Notice that one has $\sigma_! = \sigma_* [-d_I]$ (for example, this follows from claim \ref{clm DG commutes} and the calculation of Pseudo-Identity in \cite[Section 6.7.3]{Ga1}) and thus the claim follows.
\end{proof}

\begin{lemma}\label{lem calc 2}
	One has $ \wt{i}^! \wt{j}_! \wt{\tau}_! C \cong \wt{\sigma}_* \omega [-d_I]$.
\end{lemma}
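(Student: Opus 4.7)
The plan is to derive Lemma \ref{lem calc 2} from Lemma \ref{lem calc 1} by base change through the three rightmost Cartesian squares of the diagram in Section \ref{sec filt of ker}, using smoothness of the vertical maps. Since $\pi$ is smooth (by the cited results of \cite{DrGa2}), Cartesianness forces its base-changes $\pi_\Sigma$, $\pi_I$, $\pi''$, $\pi'$ to be smooth as well. In this setting one has the standard D-module base-change compatibilities
\[
\pi_I^! \circ i^! \cong \wt{i}^! \circ \pi^!, \quad \pi^! \circ j_! \cong \wt{j}_! \circ \pi_\Sigma^!, \quad \pi_\Sigma^! \circ \tau_! \cong \wt{\tau}_! \circ \pi''^!, \quad \pi_I^! \circ \sigma_* \cong \wt{\sigma}_* \circ \pi'^!,
\]
where the two middle ones use smoothness of the vertical map while the outer two are automatic (functoriality of $!$-pullback, and base-change for $!$-pullback against $*$-pushforward, respectively).

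I then apply $\pi_I^!$ to both sides of the identity $i^! j_! \tau_! C \cong \sigma_* \omega [-d_I]$ from Lemma \ref{lem calc 1} and propagate it through the diagram using these four compatibilities. This yields
\[
\wt{i}^! \wt{j}_! \wt{\tau}_! (\pi''^! C) \;\cong\; \wt{\sigma}_* (\pi'^! \omega)\,[-d_I].
\]
Since $\{e_\Sigma\}$ and $\{e_I\}$ are points, both $C$ and $\omega$ on them coincide with $k$, so $\pi''^! C \cong \omega_{(G\times G) \backslash S_{e_\Sigma}}$ and $\pi'^! \omega \cong \omega_{(G\times G) \backslash S_{e_I}}$. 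The intermediate identity therefore reads $\wt{i}^! \wt{j}_! \wt{\tau}_! \omega \cong \wt{\sigma}_* \omega[-d_I]$.

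To finish, we replace $\omega$ by $C$ on the source. The identification $(G \times G) \backslash S_{e_\Sigma} \cong G \backslash G$ recorded in Section \ref{sec filt of ker} exhibits the source as a smooth quotient stack of (stacky) dimension $\dim G - \dim G = 0$, on which $\omega \cong C$. Substituting then gives the stated $\wt{i}^! \wt{j}_! \wt{\tau}_! C \cong \wt{\sigma}_* \omega[-d_I]$.

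The main subtlety is in justifying that the mixed base-change isomorphisms (especially $\pi^! j_!$ and $\pi_\Sigma^! \tau_!$) hold with no net cohomological shift; this reduces to the observation that smoothness gives $\pi^! \cong \pi^*[\,\textnormal{rel.\ dim.}\,]$ and that the relative dimensions match across each Cartesian square, so the shifts introduced by passing between $!$- and $*$-pullbacks cancel. One could alternatively avoid base change altogether and repeat the contraction-principle argument of Lemma \ref{lem calc 1} directly upstairs, using that the right $T$-action on $\overset{\circ}{V}$ covers the contraction of $\bar{T}_{adj}$ onto $(\bar{T}_{adj})_I$; but the base-change route is shorter.
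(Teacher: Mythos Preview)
Your proof is correct and follows essentially the same route as the paper: both apply smooth base change along the vertical maps $\pi,\pi_\Sigma,\pi_I,\pi',\pi''$ to reduce to Lemma~\ref{lem calc 1}. The only cosmetic difference is that the paper pulls back by $\pi_I^*$ (writing $\wt{i}^! \wt{j}_! \wt{\tau}_! C = \pi_I^* i^! j_! \tau_! C \cong \pi_I^* \sigma_* \omega[-d_I] = \wt{\sigma}_* (\pi')^* \omega[-d_I] = \wt{\sigma}_* \omega[-d_I]$), so that $C$ is preserved on the left and the dimension-zero identification $C=\omega$ is used implicitly on $(G\times G)\backslash S_{e_I}$ at the last step, whereas you pull back by $\pi_I^!$, obtain $\omega$ on the source, and then invoke the dimension-zero identification on $(G\times G)\backslash S_{e_\Sigma}\cong G\backslash G$ instead. (A trivial slip: you say ``three rightmost'' Cartesian squares but in fact list and use all four.)
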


\begin{proof}
	We have $$\wt{i}^! \wt{j}_! \wt{\tau}_! C = \pi_I^* i^! j_! \tau_! C \cong \pi_I^* \sigma_* \omega [-d_I] = \wt{\sigma}_* (\pi^{\prime})^* \omega [-d_I] = \wt{\sigma}_* \omega [-d_I].$$
\end{proof}

\begin{lemma}\label{lem calc 3}
	One has $ (pr_{1,2} \circ \wt{i})_* \wt{i}^! \wt{j}_! \wt{\tau}_! C \cong (pr_{1,2} \circ \wt{i} \circ \wt{\sigma})_* \omega [-d_I]$.
\end{lemma}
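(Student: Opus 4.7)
The statement follows almost immediately from Lemma \ref{lem calc 2} together with the functoriality of the $*$-pushforward. My plan is the following.

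First, I would apply Lemma \ref{lem calc 2}, which yields the isomorphism $\wt{i}^! \wt{j}_! \wt{\tau}_! C \cong \wt{\sigma}_* \omega [-d_I]$ in $D((G \times G) \ \backslash \ S_I \ / \ T)$. Since cohomological shifts commute with all functors, I may pull the $[-d_I]$ outside at the end.

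Next, I would apply the pushforward $(pr_{1,2} \circ \wt{i})_*$ to both sides. On the right-hand side, this yields $(pr_{1,2} \circ \wt{i})_* \wt{\sigma}_* \omega [-d_I]$, and I would then invoke the functoriality of pushforward under composition, i.e.\ $(f \circ g)_* \cong f_* \circ g_*$, applied to $f = pr_{1,2} \circ \wt{i}$ and $g = \wt{\sigma}$, to rewrite this as $(pr_{1,2} \circ \wt{i} \circ \wt{\sigma})_* \omega [-d_I]$. This matches the right-hand side of the desired isomorphism.

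In short, this lemma is essentially a packaging step: it combines the content of Lemma \ref{lem calc 2} with the triviality that pushforwards compose, in order to present the kernel computation in a form directly suitable for recognizing the functor $pind_I^- \circ pres_I [-d_I]$ in the next step (Claim \ref{clm calc of filt}). I do not anticipate any obstacle; the only thing to verify is that no boundedness/definedness issues obstruct applying $(pr_{1,2} \circ \wt{i})_*$, but this is automatic since $pr_{1,2}$ is projective (noted after the description of the diagram in subsection \ref{sec filt of ker}) and $\wt{i}$ is a (locally) closed embedding, so their composition is a well-behaved morphism of QCA stacks for which $*$-pushforward is defined and composes strictly.
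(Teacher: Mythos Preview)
Your proposal is correct and matches the paper's approach exactly: the paper's own proof consists of the single sentence ``Follows from \ref{lem calc 2},'' which is precisely the application of Lemma~\ref{lem calc 2} followed by the composition of $*$-pushforwards that you spell out. Your additional remarks about well-definedness are fine but not needed here.
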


\begin{proof}
	Follows from \ref{lem calc 2}.
\end{proof}

\subsection{Description by correspondence}

From lemma \ref{lem calc 3}, we see that the kernel $$(pr_{1,2} \circ \wt{i})_* \wt{i}^! \wt{j}_! \wt{\tau}_! C \in D(G \backslash G \times G \backslash G)$$ corresponds to the shift by $-d_I$ of  $(pr_1)_* pr_2^!$ where $$ \xymatrix{& (G \times G) \ \backslash \ S_{e_I} \ar[ld]_{pr_1} \ar[rd]^{pr_2} & \\ G \ \backslash \ G & & G \ \backslash \ G}.$$ Using the concrete description of $S_{e_I}$ in section \ref{sec filt of ker}, it is easy to identify this correspondence with the correspondence defining $pind_I^- \circ pres_I$, thus settling claim \ref{clm calc of filt}, and with it theorem \ref{thm main}.

\end{document}